\newcommand{\nocontentsline}[3]{}
\newcommand{\tocless}[2]{\bgroup\let\addcontentsline=\nocontentsline#1{#2}\egroup}
\theoremstyle{plain}
\newtheorem{theorem}{Theorem}[section]
\newtheorem{lemma}[theorem]{Lemma}
\theoremstyle{definition}
\theoremstyle{remark}
\numberwithin{equation}{section} \theoremstyle{corollary}
\newenvironment{proof}[1][Proof]{\begin{trivlist}
\item[\hskip \labelsep {\bfseries #1}]}{\end{trivlist}}
\begin{document}
\begin{frontmatter}

\title{A model of regulatory dynamics with  threshold-type state-dependent delay  \thanksref{GRN}}
\thanks[GRN]{}

 \author[UTD]{Qingwen Hu}\ead{qingwen@utdallas.edu} 
 \address[UTD]{Department of Mathematical Sciences,  The University of Texas at Dallas, Richardson, TX, 75080}

\date{}

\maketitle

\begin{abstract}
We  model  intracellular regulatory dynamics with threshold-type state-dependent delay and investigate the effect of the state-dependent  diffusion time. A general model which is  an extension of the classical differential equation models with constant  or zero time delays  is developed to study the stability of steady state, the occurrence and stability of periodic oscillations in regulatory dynamics. Using the method of multiple time scales, we compute the normal form of the general model and show that the state-dependent diffusion time may  lead to both supercritical and subcritical Hopf bifurcations. Numerical simulations of the prototype model of Hes1 regulatory dynamics are  given to illustrate the general results.\end{abstract}

\begin{keyword} Diffusion time \sep state-dependent delay\sep     Hopf bifurcation\sep multiple time scales\sep normal form 
 
\end{keyword}

 
 \end{frontmatter}
\thispagestyle{plain}
 
\section{Introduction} \label{Sec1}

An important goal of system biology is to understand the mechanisms that govern  the protein regulatory dynamics. The so-called feedback loop  is believed to be widely present in various eukaryotic cellular processes  and regulates the gene expression  of a broad class of intracellular proteins. It is usually understood   that in the feedback loop  transcription factors are regulatory proteins which activate transcription in eukaryotic cells; they act by binding to a specific DNA sequences in the nucleus, either activating or inhibiting the binding of RNA polymerase to DNA; mRNA is transcribed in the nucleus and is in turn translated in the cytoplasm. Such a feedback loop was described by  systems of ordinary differential equations in \cite{Goodwin, Griffith1968a, Griffith1968b, Tyson-1, Smith1987b}.

Many scholars have noticed that the   genetic regulatory dynamics are dependent on the intracellular transport of mRNA and protein \cite{Mahaffy-Pao1984, Smolen1999a}. In particular, models of genetic regulatory dynamics are developed to explain a variety of sustained periodic biological phenomena \cite{Smolen2000}. Hence the possible roles played by the time delays in the signaling pathways become an interesting research topic and we are interested   how time delays are associated with periodic oscillations.  Regulatory models with constant delays are incorporated in the work of \cite{Smolen1999a, WAN2009464}, among many others, where time delay is  used as a parameter and is tuned to observe stable periodic oscillations. It was   pointed out in \cite{Smolen2000} that if the simplification of constant  time delay is too drastic, another approach is to assume a distributed time delay. In the work of Busenburg and Mahaffy \cite{Mahaffy-Busenberg}, both diffusive  macromolecular transport represented as a distributed delay and active  macromolecular transport modeled as time delay are considered. It was demonstrated  that stable periodic oscillations  can occur when transport was slowed by increasing the time delay, and that oscillations are dampened if transport   was slowed by reducing the diffusion coefficients. 

We follow the idea of the work of \cite{Mahaffy-Busenberg} to consider both diffusion transport  and active transport in genetic regulatory dynamics, while we start from the Goodwin's model \cite{Goodwin} in contrast to the  reaction-diffusion equations  in \cite{Mahaffy-Busenberg}. Namely, we will provide a new perspective to investigate genetic regulatory dynamics which generalizes the previous extension of Goodwin's model with constant delay and simplifies the approach adopted by  \cite{Mahaffy-Busenberg}. To be more specific, we  begin with the prototype model for Hes1 system (see Section~\ref{Sec2} for a brief introduction) and discuss how to  model the diffusion time if we consider the effect of the fluctuation of concentrations of the substances in the  Hes1 system. We show that modeling the diffusion time will lead to a model with threshold type distributed delay, which is also called threshold type state-dependent  delay  \cite{smith1993, BHK}.  With a general model with  threshold type state-dependent  delay,  we are interested how the model with state-dependent delay   differs from those with only constant delays, how the stabilities of the steady state and the periodic oscillations depend on the state-dependent delay.   

We organize  the remaining of the paper as follows. In Section~\ref{Sec2}, we model the diffusion time in regulatory dynamics and  set up a prototype system of differential equations with state-dependent delay for the Hes1 system. In Section~\ref{Sec3}, we consider a general model with state-dependent delay and conduct linear stability analysis of the equivalent model and conduct a  local Hopf bifurcation  analysis of the system using the multiple time scale method.  Numerical simulations on the model of Hes1 system with state-dependent delay will be illustrated in Section~\ref{Sec6}. We conclude the investigation at Section~\ref{Sect5}.

\section{State-dependent diffusion time in regulatory dynamics}\label{Sec2}
In this section, we motivate the modeling of diffusion time delay with the  Hes1 regulatory system. Hes1 is a  protein which belongs to the basic helix-loop-helix (bHLH,  a protein structural motif) family of transcription factors. It is a transcriptional repressor of genes that require a bHLH protein for their transcription. As a member of the bHLH family, it is a transcriptional repressor that influences cell proliferation and differentiation in embryogenesis.  Hes1 regulates its own expression via a negative feedback loop, and oscillates with approximately 2-hour periodicity \cite{Hirata}. However, the molecular mechanism of such oscillation remains to be determined \cite{Hirata}.  Mathematical modeling of Hes1 regulatory system has been an active research area in the last decade. See among many others, \cite{Sturrock,Jensen}.

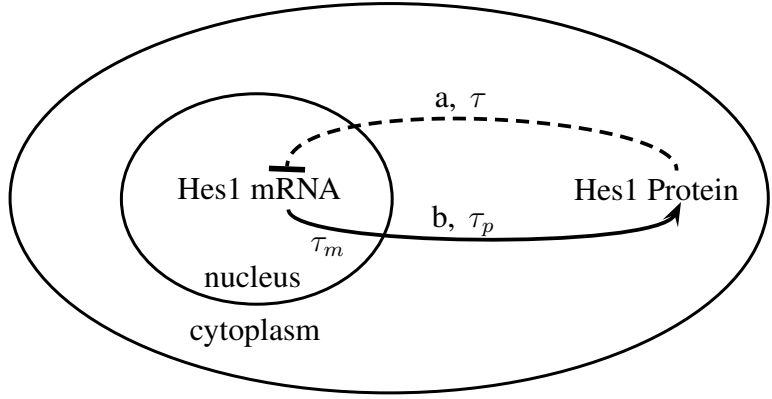
\begin{figure}\centering
\scalebox{1} 
{
\begin{pspicture}(0,-2.61)(10.12,2.61)
\psellipse[linewidth=0.04,dimen=outer](5.06,0.0)(5.06,2.61)
\psellipse[linewidth=0.04,dimen=outer](3.3,-0.01)(1.82,1.42)
\usefont{T1}{ptm}{m}{n}
\rput(8.616133,0.115){Hes1 Protein}
\usefont{T1}{ptm}{m}{n}
\rput(3.3348925,0.135){Hes1 mRNA}
\usefont{T1}{ptm}{m}{n}
\rput(3.2730665,-1.785){cytoplasm}
\usefont{T1}{ptm}{m}{n}
\rput(3.242666,-1.045){nucleus}
\rput(4.242666,-0.645){$\tau_m$}
\usefont{T1}{ptm}{m}{n}
\rput(6.013125,1.275){a,\, $\tau$}
\usefont{T1}{ptm}{m}{n}
\rput(6.044199,-0.305){b,\, $\tau_p$}
\psbezier[linewidth=0.05,linecolor=black,arrowsize=0.05291667cm 2.0,arrowlength=1.4,arrowinset=0.4]{<-}(8.94,-0.05)(8.62,-0.69)(3.7402325,-0.6558536)(3.72,-0.15)
\psbezier[linewidth=0.05,linestyle=dashed,linecolor=black,tbarsize=0.07055555cm 5.0]{-|}(8.88,0.3686207)(8.94,1.1134483)(3.74,1.43)(3.7,0.35)
\end{pspicture} 
}
\caption{Hes1 regulatory system in a cell: a. inhibition of mRNA transcription in nucleus from protein diffused from cytoplasm, b. translation of mRNA for protein synthesis in cytoplasm}\label{Figure1}
\end{figure}
The basic reaction kinetics for the regulatory system (Figure 1) can be described by the following ordinary differential equations which is in general called the Goodwin model (See \cite{Griffith1968a,Griffith1968b}). Denote by $x_m(t)$ and $y_p(t)$ the concentrations of the species of  Hes1 mRNA and Hes1 protein in a cell, respectively.  
\begin{align}\label{ODE-hes1}
\left\{
\begin{aligned}
\frac{\mathrm{d}x_m(t)}{\mathrm{d}t} & =-\mu_m x_m(t)+\frac{\alpha_m}{1+\left(\frac{y_p(t)}{\bar{y}}\right)^h},\\
  \frac{\mathrm{d}y_p(t)}{\mathrm{d}t} & =-\mu_p y_p(t)+\alpha_p x_m(t),
\end{aligned}
\right.
\end{align}
where $\mu_m$ and $\mu_p$ are the degradation rates of mRNA and protein, respectively, $\alpha_m$ the basal rate of initial transcription without feedback from protein, and $\alpha_p$ the production rate of protein, $\bar{y}$ the concentration of mRNA that reduces the rate of initial transcription to half of its basal value. 

 Since  mRNA is typically transcribed in nucleus and then translated to cytoplasm for protein synthesis, there exists transcriptional and translational delays in the real reaction kinetics. For a better description of the intracellular processes, (\ref{ODE-hes1}) was extended into  the following model of delay differential equation (see, e.\,g.,\cite{Jensen})
 \begin{align}\label{DDE-hes1}
\left\{
\begin{aligned}
\frac{\mathrm{d}x_m(t)}{\mathrm{d}t} & =-\mu_m x_m(t)+\frac{\alpha_m}{1+\left(\frac{y_p(t-\tau_m)}{\bar{y}}\right)^h},\\
  \frac{\mathrm{d}y_p(t)}{\mathrm{d}t} & =-\mu_p y_p(t)+\alpha_p x_m(t-\tau_p),
\end{aligned}
\right.
\end{align}
where $\tau_m$ and $\tau_p$ are transcriptional and translational delays, respectively. This model was able to simulate  oscillatory solutions with the period and mRNA expression lag in good agreement with some experimental data. The  transcriptional and translational  time delay are usually assumed to be constant  and it is then a natural  problem to model the time delays in order to investigate the effect of the varying time delays on genetic regulatory systems.

For simplicity, we combine the reaction or transcription time with the translation time between the nucleus and the cytoplasm  and call the combination is the diffusion time which will be denoted by $\tau$. 
  In such a way $\tau_m$ in (\ref{DDE-hes1}) is re-defined to be the mRNA transcription time and   $\tau_p$ the protein diffusion time. That is, both are assumed to be the part of the time for diffusion processes, and the termination of the diffusion process incurs that of the inhibition of mRNA. That is, we assume  that
\begin{enumerate}
\item[(A1)]  $\tau=\tau_m+\tau_p$.
\end{enumerate} 
Now we drop the subscripts for the variables $x_m,\,y_p,\,\tau_m$ and $\tau_p$ in the following presentation, and turn to the modeling of the  part of diffusion time due to spatial translation.  Let  $T$ to be the time to homogenize a newly produced solute (mRNA or protein) in the cell solvent, $\epsilon$ be the transcription time which will be assumed to be a constant.  From Fick's first law of diffusion, we know that the diffusion time for a newly produced solute to homogenize in a cell  is not necessarily a constant but is dependent on the concentration gradient.  To model the diffusion time delay, let us suppose that we put a drop (or subtract a drop) of solute in a cup of solvent with concentration $x(t)$ which has taken account of the new drop. The diffusion time $T$ to homogenize depends on the homogenized concentration $x(t-\tau)$ of the solute in the cup without taking account of the new drop. Therefore the diffusion time to homogenize the new drop depends on the concentration difference $x(t)-x(t-\tau)$.

The next problem is how $T$ depends on the concentration difference $x(t)-x(t-\tau)$. In order to derive a reasonable modeling of $\tau$, we virtually assume that the  distance of the central locations of concentrations $x(t)$ and $x(t-\tau(t))$ is $L$, where $L>0$ is small.  It is known that the diffusion time $\tau$ is inversely proportional to the diffusion coefficient ($D$) and approximately satisfies
$T=\frac{L^2}{2D}$. Then by Fick's first law we know that the diffusion flux (${J}$) is proportional to the existing concentration gradient under the assumption of steady state. Namely, $J$ can be approximated by $J=-D \frac{x(t)-x(t-\tau)}{L}$, where the positivity of $J$ is opposite to that of $x(t)-x(t-\tau)$.  To obtain an approximation of $\tau$  we reduce $D$   from the aforementioned expressions of $T$ and $J$. Namely,  $T $ can be approximated through
\begin{align}
T&=-\frac{L(x(t)-x(t-\tau))}{2J}.\label{Travel-time}
\end{align}Then we have
$\tau=T+\epsilon=-\frac{L(x(t)-x(t-\tau))}{2J}+\epsilon$.   In the following, we assume that
\begin{enumerate}
\item[(A2)] there exist  constants $\epsilon>0, \,c>0$ such that $\tau(t)=c (x(t)-x(t-\tau(t)))+\epsilon$. 
\end{enumerate}
Namely, we assume that the diffusion time is linearly dependent on the fluctuation  of the concentrations of mDNA. We remark that $x(t)-x(t-\tau)$ may be negative which means that current concentration is less than the historical one and we interpret that in this case the   transcription time is reduced by $c(x(t)-x(t-\tau))$ from $\epsilon$ with less concentration.
Moreover, if the concentrations are in their equilibrium states, the diffusion time becomes the transcriptional delay $\epsilon$.
  We also remark that such type of state-dependent delay has appeared in   position control \cite{MR2016913} modeling the state-dependent traveling time of the echo, and in regenerative turning processes \cite{Insperger2007} modeling the cutting time of one round of turning.
 
 Now  we obtain the following state-dependent delay differential equations,
\begin{align}\label{SDDE-hes1}
\left\{
\begin{aligned}
\frac{\mathrm{d}x(t)}{\mathrm{d}t} & =-\mu_m x(t)+\frac{\alpha_m}{1+\left(\frac{y(t-\tau)}{\bar{y}}\right)^h},\\
  \frac{\mathrm{d}y(t)}{\mathrm{d}t} & =-\mu_p y(t)+\alpha_p x(t-\tau),\\
 \tau(t)&=c(x(t)-x(t-\tau))+\epsilon.
\end{aligned}
\right.
\end{align}
which provides another view on the models (\ref{ODE-hes1}) and (\ref{DDE-hes1}): If we assume that the transcription and translation of the newly produced solute is instant, namely, $T=0$, then we have  model  (\ref{ODE-hes1}) which is a system of ordinary differential equation.  If we assume that the transcription and translation of the newly produced solute  needs a constant positive time, then we have model (\ref{DDE-hes1}) which is a system of delay differential equation with constant delay. 

It was shown in \cite{HKT} that system with the state-dependent delay in the form of $\tau(t)=r(x(t)-x(t-\tau(t)))$ with  $r$ being inhomogeneous linear can be transformed into models with both discrete and distributed delay which greatly facilitated the qualitative analysis of such type of models \cite{HKT,BHK}.   In the next section we  investigate systems~(\ref{SDDE-hes1})    based on the discussion of a more broad class of equations. 

\section{Regulatory dynamics  with state-dependent delay}\label{Sec3}
Regarding system~(\ref{SDDE-hes1})  as a prototype model of a range of genetic regulatory dynamics, we consider the following system, 
\begin{align}\label{SDDE-general-1}
\left\{
\begin{aligned}
\frac{\mathrm{d}x(t)}{\mathrm{d}t} & =-\mu_m x(t)+f(y(t-\tau)),\\
  \frac{\mathrm{d}y(t)}{\mathrm{d}t} & =-\mu_p y(t)+ g(x(t-\tau)),\\
 \tau(t)&=\epsilon+c(x(t)-x(t-\tau)),
\end{aligned}
\right.
\end{align}where $f,\,g: \mathbb{R}\rightarrow\mathbb{R}$ are three times continuously differentiable functions; $\mu_m$, $\mu_p$, $c$ and $\epsilon$ are positive constants. In the following we denote by $C([a,\,b];\mathbb{R}^N)$ with $a<b$ the space of continuously functions  $f: [a,\,b]\rightarrow\mathbb{R}^N$ equipped with the supremum norm  and by $C^1([a,\,b];\mathbb{R}^N)$ with $a<b$ the space of continuously differentiable functions.  We assume that 
\begin{enumerate}
\item[B1)]  System (\ref{SDDE-general-1}) has at least one equilibrium  $(r^*,\,\xi^*)$ for $(x,\,y)$ and there exists $\alpha_0>0$ and a  neighborhood $D$ of  $(r^*,\,\xi^*)$ in the space of continuous differentiable functions $C^1([-\alpha_0,\,0];\mathbb{R}^2)$  such that for every   initial data in $D$ for $(x,\,y)$ and the initial data $\tau_0\in(0,\,\alpha_0)$ satisfying the compatibility conditions
\begin{align}\label{compatibility}
\left\{
\begin{aligned}
x'(0) & =-\mu_m x(0)+f(y(-\tau_0)),\\
y'(0) & =-\mu_p y(0)+ g(x(-\tau_0)),\\
 \tau_0 & = \epsilon+c(x(0)-x(-\tau_0)),
\end{aligned}
\right.
\end{align}
  system (\ref{SDDE-general-1}) has a unique solution $(x,\,y,\,\tau)$ on $[0,\,+\infty)$. 
\item[B2)] Let $D$ be as in (B1).  For every solution  $(x,\,y,\,\tau)$ of system (\ref{SDDE-general-1})  on $[0,\,+\infty)$ with initial data in $D$ for $(x,\,y)$ and initial data $\tau_0\in(0,\,+\infty)$, we have $1/c >\dot{x}(t)$ for all  $t>0$. 
\end{enumerate}
We rewrite the   equation  for the delay in (\ref{SDDE-hes1}) as
\begin{align}
\int_{t-\tau(t)}^t\frac{1-c\dot{x}(s)}{\epsilon}\mathrm{d}s=1.\label{SDDE-eqn-5}
\end{align}
Let $u(t)=\dot{x}(t)$  and introduce the following  transformation inspired by \cite{smith1993}:
\begin{align}\label{smith-transform}
\eta  =\int_{0}^t\frac{1-cu(s)}{\epsilon}\mathrm{d}s, \quad
 r(\eta) =x(t),\quad
 \xi(\eta) =y(t),\quad
 k(\eta) = \tau(t).
\end{align} 
Then we have 
\begin{align} \label{const-delay} 
\left\{
\begin{aligned}
\eta-1 &= \int_{0}^{t-\tau(t)}\frac{1-cu(s)}{\epsilon}\mathrm{d}s,\\
 r(\eta-1) &=x(t-\tau(t)),\\
 \xi(\eta-1) &=y(t-\tau(t)),\\
 k(\eta) &=\epsilon+c(r(\eta)-r(\eta-1)).
 \end{aligned}
 \right.
\end{align}
Note that we have
\begin{align}\label{derive}
\left\{
\begin{aligned}
\dot{x}(t) & = \frac{\mathrm{d}r}{\mathrm{d} \eta} \cdot \frac{\mathrm{d}\eta}{\mathrm{d}t}=\dot{r}(\eta)\cdot\frac{1-c\dot{x}}{\epsilon},\\
\dot{y}(t) &= \frac{\mathrm{d}\xi}{\mathrm{d} \eta} \cdot \frac{\mathrm{d}\eta}{\mathrm{d}t}=\dot{\xi}(\eta)\cdot\frac{1-c\dot{x}}{\epsilon},
\end{aligned}
\right.
\end{align}
 Then by (\ref{const-delay}) and  (\ref{derive}), system  (\ref{SDDE-hes1}) is transformed into
 \begin{align}\label{tranformed-general}
\left\{
\begin{aligned}
 \frac{\mathrm{d}r}{\mathrm{d} \eta} & = \epsilon\frac{-\mu_m r(\eta)+f(\xi(\eta-1))}{1-c(-\mu_m r(\eta)+f(\xi(\eta-1)))},\\
  \frac{\mathrm{d}\xi}{\mathrm{d} \eta} & = \epsilon\frac{-\mu_p \xi(\eta)+g(r(\eta-1))}{1-c(-\mu_m r(\eta)+f(\xi(\eta-1)))},\\
 k(\eta) &=\epsilon+c(r(\eta)-r(\eta-1)),
\end{aligned}
\right.
\end{align}where the assumption that $1/c>\dot{x}(t)$ for all $t\in\mathbb{R}$ has been converted by the first equation in (\ref{derive}) into $\dot{r}(\eta)>-\epsilon/c$. We remark that if we set $c=0$ in system (\ref{tranformed-general}), we obtain
 \begin{align}\label{tranformed-general-c-0}
\left\{
\begin{aligned}
\frac{1}{ \epsilon} \frac{\mathrm{d}r}{\mathrm{d} \eta} & ={-\mu_m r(\eta)+f(\xi(\eta-1))}\\
 \frac{1}{ \epsilon}  \frac{\mathrm{d}\xi}{\mathrm{d} \eta} & = {-\mu_p \xi(\eta)+g(r(\eta-1))}\\
 k(\eta) &=\epsilon,
\end{aligned}
\right.
\end{align}
which is equivalent to (\ref{SDDE-general-1}) with $c=0$:
\begin{align}\label{SDDE-general-c-0}
\left\{
\begin{aligned}
\frac{\mathrm{d}x(t)}{\mathrm{d}t} & =-\mu_m x(t)+f(y(t-\tau)),\\
  \frac{\mathrm{d}y(t)}{\mathrm{d}t} & =-\mu_p y(t)+ g(x(t-\tau)),\\
 \tau(t)&=\epsilon,
\end{aligned}
\right.
\end{align} in the sense of the  time-domain transformations leading to system (\ref{tranformed-general}).

Comparing system (\ref{tranformed-general}) with  system (\ref{tranformed-general-c-0}), we see that both systems have the same set of equilibria. We also observe that,  if $1-c\dot{x}(t)>0$ is uniformly bounded above for $t>0$, then (\ref{SDDE-eqn-5}) describes the situation that the diffusion process is slow when $\epsilon$ is large and  is fast when $\epsilon$ is small. Moreover, from system (\ref{tranformed-general-c-0}) we know that the parameter  $\epsilon$  may serve as a time scale which is  highly desirable  to investigate the intercelluar regulatory dynamics. In the subsequent sections we study how the  dynamics of system  (\ref{tranformed-general}) varies with respect to the parameter $\epsilon\in (0,\,+\infty)$. We remark that limiting profiles of periodic solutions as $\epsilon\rightarrow 0$ was  investigated in \cite{Nussbaum1} for the equation
$
\epsilon\dot{x}(t)=-x(t) + f(x(t-1)).
$
 \subsection{Linear stability analysis}
Now we study the stability of the equilibrium of system (\ref{tranformed-general}). Note that the variable $k$ has been decoupled from the differential equations. Therefore, we assume that $(r^*,\,\xi^*)\in\mathbb{R}^2$ is the equilibrium of system (\ref{tranformed-general})   such that
\begin{align*}
\left\{
\begin{aligned}
-\mu_m r^*+f(\xi^*))&=0,\\
-\mu_p\xi^*+g(r^*)&=0.
\end{aligned}
\right.
\end{align*}
We translate the equilibrium of (\ref{tranformed-general}) to the origin by letting $(u,\ v)$ be such that  $r=u+r^*$, $\xi=v+\xi^*$ and let  $F,\,G: \mathbb{R}\rightarrow\mathbb{R}$ satisfy  that
\begin{align*}
F(v(\eta-1))=f(v(\eta-1)+\xi^*)-f(\xi^*),\quad
G(u(\eta-1))=g(u(\eta-1)+r^*)-g(r^*).
\end{align*}
Then we have $F'(0)=f'(\xi^*)$,  $G'(0)=g'(r^*)$, and
\begin{align}\label{eqn-5-2}
\left\{
\begin{aligned}
\frac{1}{\epsilon}\frac{\mathrm{d}u}{\mathrm{d}\eta}&=\frac{-\mu_m u+F(v(\eta-1))}{1-c[-\mu_m u+F(v(\eta-1))]},\\
\frac{1}{\epsilon}\frac{\mathrm{d}v}{\mathrm{d}\eta}&=\frac{-\mu_p v+G(u(\eta-1))}{1-c[-\mu_m u+F(v(\eta-1))]}.
\end{aligned}
\right.
\end{align}
Linearization of (\ref{eqn-5-2}) at $(0,\,0)$  with $x=(u,\,v)$ is
\begin{align}\label{linearized}
\dot{x}(\eta)=\epsilon
M
x(\eta)+\epsilon
Nx(\eta-1),
\end{align}
where $M=\begin{bmatrix}
-\mu_m & 0\\
0 & -\mu_p
\end{bmatrix}$ and $N=\begin{bmatrix}
 0 & f'(\xi^*)\\
g'(r^*) & 0 
\end{bmatrix}$.
The characteristic equation of (\ref{linearized}) is $\det(\lambda I+\epsilon M-\epsilon N e^{-\lambda})=0,$ which is equivalent to
\begin{align}\label{charac-1}
(\lambda+\epsilon\mu_m)(\lambda+\epsilon\mu_p)-\epsilon^2 f'(\xi^*)g'(r^*)e^{-2\lambda}=0.
\end{align}
Let $z=2\lambda$, $\tau=2\epsilon$ and $f'(\xi^*)g(r^*)=-h$, then the characteristic equation (\ref{charac-1}) becomes
\begin{align}\label{charac-2}
z^2+\tau(\mu_m+\mu_p)z+\tau^2\mu_m\mu_p+\tau^2h e^{-z}=0,
\end{align}
which is the same equation investigated in \cite{chenwu}.  Then by the results in \cite{chenwu} we have,
  \begin{lemma}\label{lemma-1}Consider  the characteristic equation (\ref{charac-1}), where $\mu_m$ and $\mu_p$ are positive constants. Then for every $\epsilon\in (0,\,+\infty)$,  the equation
  \begin{align*}
\beta^2-{\epsilon}^2 (\mu_m\mu_p)=\epsilon(\mu_m+\mu_p)\beta \cot2\beta,
\end{align*}has a unique solution for $\beta$ in $(0,\,\frac{\pi}{2})$, denoted by $\beta(\epsilon)$
and the following statements hold.
 \begin{enumerate}
\item[i)]If 
 $
 \mu_m\mu_p\geq -f'(\xi^*)g'(r^*)\geq 0,
 $  then the equilibrium of (\ref{tranformed-general}) is asymptotically stable.
\item[ii)] If  
 $
 \mu_m\mu_p< -f'(\xi^*)g'(r^*),
 $ then there exists a unique $\epsilon_0\in (0,\,+\infty)$ such that
 $
 (\mu_m+\mu_p) \beta(\epsilon_0)+\epsilon_0 f'(\xi^*)g'(r^*)\sin 2\beta(\epsilon_0)=0,
 $  and $\pm i\beta(\epsilon_0)$ is a pair of purely imaginary eigenvalues. Moreover,
  for every $\epsilon\in (0,\,\epsilon_0)$,  the equilibrium of (\ref{tranformed-general}) is asymptotically stable;   for every $\epsilon\in (\epsilon_0,\,+\infty)$,  the equilibrium of (\ref{tranformed-general}) is unstable, and there exists a sequence $\{\epsilon_k\}_{k=0}^\infty$ of critical values of $\epsilon$ for which  (\ref{charac-1}) has a corresponding sequence of purely imaginary eigenvalues  $\{i(\beta(\epsilon_0)+k\pi)\}_{k=1}^\infty$ where
 $
 \epsilon_k=\frac{\epsilon_0(\beta(\epsilon_0)+k\pi)}{\beta(\epsilon_0)}\in S_k,
 $ with $S_0 =\left\{\lambda\in\mathbb{C}: \mathrm{Im}{\lambda}\in (0,\,\frac{\pi}{2})\right\},$ $ S_k =\left\{\lambda\in\mathbb{C}: \mathrm{Im}{\lambda}\in ((k-\frac{1}{2})\pi,\,(k+\frac{1}{2})\pi)\right\}, \,k=1,\,2\cdots.$
\end{enumerate}
\end{lemma}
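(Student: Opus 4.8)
The plan is to reduce the entire statement to the scalar transcendental equation \eqref{charac-2}, which is literally the equation analysed in \cite{chenwu}, and then to recover the explicit objects $\beta(\epsilon)$, $\epsilon_0$ and $\{\epsilon_k\}$ by a direct study of the purely imaginary roots of \eqref{charac-1}. First I would record the substitution $z=2\lambda$, $\tau=2\epsilon$, $f'(\xi^*)g'(r^*)=-h$ that carries \eqref{charac-1} into \eqref{charac-2}; this is routine and places the problem in the normal form for which \cite{chenwu} provides a complete stability classification governed by the sign of $\mu_m\mu_p-h$.

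The heart of the argument is the purely imaginary root analysis. Substituting $\lambda=i\beta$ with $\beta>0$ into \eqref{charac-1} and separating real and imaginary parts gives
\begin{align*}
\beta^2-\epsilon^2\mu_m\mu_p &= -\epsilon^2 f'(\xi^*)g'(r^*)\cos 2\beta,\\
(\mu_m+\mu_p)\beta &= -\epsilon f'(\xi^*)g'(r^*)\sin 2\beta.
\end{align*}
Eliminating $f'(\xi^*)g'(r^*)$ between these two identities reproduces the $\beta(\epsilon)$ equation in the statement. To establish uniqueness of $\beta(\epsilon)$ in $(0,\frac{\pi}{2})$ I would show that $\Psi(\beta):=\beta^2-\epsilon^2\mu_m\mu_p-\epsilon(\mu_m+\mu_p)\beta\cot 2\beta$ is strictly increasing there: the first two terms increase, while $\beta\cot 2\beta$ decreases because $\frac{d}{d\beta}(\beta\cot 2\beta)=\cot 2\beta-2\beta\csc^2 2\beta<0$ is equivalent to $\sin 4\beta<4\beta$, which holds on $(0,\frac{\pi}{2})$. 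Since $\Psi(0^+)<0$ (as $\beta\cot 2\beta\to\frac12$) and $\Psi(\frac{\pi}{2}^-)=+\infty$ (as $\cot 2\beta\to-\infty$), there is exactly one root.

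For the dichotomy I would invoke the classification of \cite{chenwu}. In case (i), $0\le -f'(\xi^*)g'(r^*)\le\mu_m\mu_p$, the imaginary crossing condition is never met; consistency is supported by the scaled limit $\epsilon\to0^+$, where writing $w=\lambda/\epsilon$ degenerates \eqref{charac-1} to $(w+\mu_m)(w+\mu_p)+h=0$, whose roots are stable because the constant term $\mu_m\mu_p+h>0$, so all roots stay in the open left half plane for every $\epsilon$. In case (ii), $-f'(\xi^*)g'(r^*)>\mu_m\mu_p$, the imaginary part equation $(\mu_m+\mu_p)\beta(\epsilon)+\epsilon f'(\xi^*)g'(r^*)\sin 2\beta(\epsilon)=0$ has a unique root $\epsilon_0$, which I would obtain from the monotonicity and sign change of $\epsilon\mapsto(\mu_m+\mu_p)\beta(\epsilon)+\epsilon f'(\xi^*)g'(r^*)\sin 2\beta(\epsilon)$ together with the transversality of the crossing furnished by \cite{chenwu}; these yield asymptotic stability on $(0,\epsilon_0)$ and instability on $(\epsilon_0,+\infty)$.

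Finally, the explicit critical sequence follows from a scaling observation: since $\cos 2\omega$ and $\sin 2\omega$ are $\pi$-periodic in $\omega$, the map $(\omega,\epsilon)\mapsto(\rho\omega,\rho\epsilon)$ leaves both purely imaginary root identities invariant whenever $\rho\omega=\omega+k\pi$. Applying this to the base solution $(\beta(\epsilon_0),\epsilon_0)$ with $\rho=(\beta(\epsilon_0)+k\pi)/\beta(\epsilon_0)$ shows that $i(\beta(\epsilon_0)+k\pi)$ is a root at $\epsilon_k=\epsilon_0(\beta(\epsilon_0)+k\pi)/\beta(\epsilon_0)$, and $\beta(\epsilon_0)+k\pi\in((k-\tfrac12)\pi,(k+\tfrac12)\pi)$ places it in the strip $S_k$. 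I expect the main obstacle to be the global control in case (i) and the direction of crossing together with the uniqueness of $\epsilon_0$ in case (ii): these require tracking all frequency branches and the sign of $\frac{d}{d\epsilon}\mathrm{Re}\,\lambda$ at the crossings, which is exactly the completed analysis of \eqref{charac-2} in \cite{chenwu} that I would cite rather than redo.
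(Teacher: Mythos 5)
Your proposal is correct and follows essentially the same route as the paper, which proves this lemma simply by transforming \eqref{charac-1} into \eqref{charac-2} via $z=2\lambda$, $\tau=2\epsilon$, $h=-f'(\xi^*)g'(r^*)$ and citing the complete analysis of that equation in \cite{chenwu}. You go further than the paper by explicitly verifying several pieces (the monotonicity of $\beta\mapsto\beta^2-\epsilon^2\mu_m\mu_p-\epsilon(\mu_m+\mu_p)\beta\cot 2\beta$ on $(0,\tfrac{\pi}{2})$ and the scaling $(\omega,\epsilon)\mapsto(\rho\omega,\rho\epsilon)$ that generates the sequence $\epsilon_k$), while still deferring to \cite{chenwu} for the same hard global facts (absence of crossings in case (i), transversality and uniqueness of $\epsilon_0$ in case (ii)) that the paper also leaves to that reference.
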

 \begin{lemma} Consider  the characteristic equation (\ref{charac-1}), where $\mu_m$ and $\mu_p$ are positive constants.  If  
 \[
 \mu_m\mu_p< -f'(\xi^*)g'(r^*),
 \] then there exists a unique $\epsilon_0\in (0,\,+\infty)$ such that
 \[
 (\mu_m+\mu_p) \beta(\epsilon_0)+\epsilon_0 f'(\xi^*)g'(r^*)\sin 2\beta(\epsilon_0)=0,
 \]  and $\pm i\beta(\epsilon_0)$ is a pair of purely imaginary eigenvalues.  Moreover, $\beta(\epsilon_0)$ satisfies
 \begin{align*}
 \tan 2\beta(\epsilon_0)=\frac{\sqrt{l}(\mu_m+\mu_p)}{1-l\mu_m\mu_p},
 \end{align*}
 where
  \begin{align*}
 l=\frac{\mu_m^2+\mu_p^2+\sqrt{(\mu_m^2-\mu_p^2)^2+4f'^2(\xi^*)g'^2(r^*)} }{2(f'^2(\xi^*)g'^2(r^*)-\mu_m^2\mu_p^2)},\quad
 \epsilon_0=\sqrt{l}\beta(\epsilon_0).
 \end{align*}
 \end{lemma}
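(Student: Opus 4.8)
The plan is to work directly with the characteristic equation (\ref{charac-1}) and extract the closed-form expressions from the purely imaginary eigenvalue condition. The existence and uniqueness of $\epsilon_0$, together with the fact that $\pm i\beta(\epsilon_0)$ is a pair of purely imaginary roots, are already furnished by Lemma~\ref{lemma-1}(ii), so the remaining task is purely computational: to pin down $l$ and $\tan 2\beta(\epsilon_0)$. Writing $p=f'(\xi^*)g'(r^*)$ for brevity (so the hypothesis reads $\mu_m\mu_p<-p$, whence $p<0$ and $p^2>\mu_m^2\mu_p^2$), I would substitute $\lambda=i\beta$ with $\beta=\beta(\epsilon_0)\in(0,\tfrac{\pi}{2})$ into (\ref{charac-1}) and separate real and imaginary parts. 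This yields
\begin{align*}
\beta^2-\epsilon_0^2\mu_m\mu_p &= -\epsilon_0^2 p\cos 2\beta,\\
(\mu_m+\mu_p)\beta &= -\epsilon_0 p\sin 2\beta,
\end{align*}
the second of which is exactly the defining relation for $\epsilon_0$ stated in the lemma.

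Next I would eliminate the trigonometric terms. Squaring both relations and adding removes $\cos 2\beta$ and $\sin 2\beta$ through $\cos^2+\sin^2=1$, and a short expansion using $(\mu_m+\mu_p)^2-2\mu_m\mu_p=\mu_m^2+\mu_p^2$ collapses the left side to
\[
\beta^4+\epsilon_0^2(\mu_m^2+\mu_p^2)\beta^2+\epsilon_0^4\mu_m^2\mu_p^2=\epsilon_0^4 p^2.
\]
Introducing $l:=\epsilon_0^2/\beta^2$ and dividing by $\beta^4$ turns this into the quadratic
\[
(p^2-\mu_m^2\mu_p^2)\,l^2-(\mu_m^2+\mu_p^2)\,l-1=0.
\]
Since $p^2>\mu_m^2\mu_p^2$ the leading coefficient is positive, so the product of the two roots equals $-1/(p^2-\mu_m^2\mu_p^2)<0$; exactly one root is positive, and because $l=\epsilon_0^2/\beta^2>0$ this forces the $+$ branch of the quadratic formula. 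Simplifying its discriminant via $(\mu_m^2+\mu_p^2)^2+4(p^2-\mu_m^2\mu_p^2)=(\mu_m^2-\mu_p^2)^2+4p^2$ reproduces the stated value of $l$, and $\epsilon_0=\sqrt{l}\,\beta(\epsilon_0)$ is just the definition of $l$.

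Finally, for the tangent identity I would take the ratio of the two separated relations rather than their squares: solving each for $\cos 2\beta$ and $\sin 2\beta$ and dividing gives
\[
\tan 2\beta=\frac{\epsilon_0(\mu_m+\mu_p)\beta}{\beta^2-\epsilon_0^2\mu_m\mu_p},
\]
and dividing numerator and denominator by $\beta^2$ and substituting $\epsilon_0/\beta=\sqrt{l}$ and $\epsilon_0^2/\beta^2=l$ yields $\tan 2\beta(\epsilon_0)=\sqrt{l}(\mu_m+\mu_p)/(1-l\mu_m\mu_p)$, as claimed. The only genuinely delicate point is the root selection in the quadratic for $l$: one must verify that $p^2>\mu_m^2\mu_p^2$ forces the negative root to be discarded, which is precisely where the hypothesis $\mu_m\mu_p<-p$ enters. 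The degenerate case $1-l\mu_m\mu_p=0$ (equivalently $2\beta=\tfrac{\pi}{2}$) is consistent with the vanishing of $\cos 2\beta$ in the first relation and needs only a passing remark.
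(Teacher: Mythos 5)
Your proposal is correct and follows essentially the same route as the paper: both reduce the purely imaginary root condition to the same pair of real/imaginary-part relations (the paper writes the first one in cotangent form via Lemma~\ref{lemma-1}), eliminate the trigonometric terms to get the quartic $\beta^4+\epsilon_0^2(\mu_m^2+\mu_p^2)\beta^2+\epsilon_0^4(\mu_m^2\mu_p^2-f'^2(\xi^*)g'^2(r^*))=0$, select the unique positive root for $l=\epsilon_0^2/\beta^2$ using $f'^2(\xi^*)g'^2(r^*)>\mu_m^2\mu_p^2$, and back-substitute to obtain the tangent identity. Your sign-of-the-product-of-roots justification for discarding the negative root is a slightly more explicit version of the paper's one-line remark, but it is not a different argument.
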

 \begin{proof}By Lemma~\ref{lemma-1} $\beta(\epsilon_0)$ satisfies
 \begin{align}\label{eqn-tan-epsilon}
 \left\{
 \begin{aligned} \beta^2(\epsilon_0)-{\epsilon_0}^2 (\mu_m\mu_p)-\epsilon_0(\mu_m+\mu_p)\beta(\epsilon_0) \cot2\beta(\epsilon_0)&=0,\\
 (\mu_m+\mu_p) \beta(\epsilon_0)+\epsilon_0 f'(\xi^*)g'(r^*)\sin 2\beta(\epsilon_0)&=0.
 \end{aligned}
  \right.
 \end{align}
  Eliminating the trigonometric terms in (\ref{eqn-tan-epsilon}), we obtain that
  \[
  \epsilon_0^4(\mu_m^2\mu_p^2-f'^2(\xi^*)g'^2(r^*)))+\epsilon_0^2(\mu_m^2+\mu_p^2)\beta^2(\epsilon_0)+\beta^4(\epsilon_0)=0,
  \]
which can be regarded as a quadratic equation of $\epsilon_0^2$ and has a unique nonnegative root since $ \mu_m\mu_p\leq -f'(\xi^*)g'(r^*).$ We have
 \begin{align*}
 \epsilon_0^2=l\beta^2(\epsilon_0),\quad
  l=\frac{\mu_m^2+\mu_p^2+\sqrt{(\mu_m^2-\mu_p^2)^2+4f'^2(\xi^*)g'^2(r^*)} }{2(f'^2(\xi^*)g'^2(r^*)-\mu_m^2\mu_p^2)}.
 \end{align*}
Bringing $\epsilon_0=\sqrt{l}\beta(\epsilon_0)$ into the first equation of (\ref{eqn-tan-epsilon}), we have
\begin{align*}
 \tan 2\beta(\epsilon_0)=\frac{\sqrt{l}(\mu_m+\mu_p)}{1-l\mu_m\mu_p}. \qed
 \end{align*}
 \end{proof}

\subsection{Local Hopf bifurcation}\label{Sec4}
In this section we use multiple time scale method to find the normal form of system (\ref{tranformed-general}) at the first critical value $\epsilon^*$  of $\epsilon$.  Namely, we let $\epsilon^*=\epsilon_0$ where $\epsilon_0$ is defined at Lemma~4.1. Since we are interested in the dynamics when $\epsilon$ varies in $(0,\,\infty)$, we study the directions of 
the Hopf bifurcation and the stability of the bifurcating periodic solutions near the equilibrium. We first check the transversality condition.  Put $\lambda=\alpha+i\beta$ with $\alpha,\,\beta\in\mathbb{R}$ in   the characteristic equation (\ref{charac-1}) and we obtain
\begin{align}\label{re-um-3}
\begin{cases}
\begin{aligned}
\alpha^2-\beta^2+\epsilon(\mu_m+\mu_p)\alpha+\epsilon^2\mu_m\mu_p-\epsilon^2f'(\xi^*)g'(r^*)e^{-2\alpha}\cos 2\beta&=0,\\
2\alpha\beta+ \epsilon(\mu_m+\mu_p)\beta+\epsilon^2f'(\xi^*)g'(r^*)e^{-2\alpha}\sin2\beta&=0.
\end{aligned}
\end{cases}
\end{align}
By Implicit function theorem, we can show that $\alpha$ and $\beta$ are continuously differentiable with respect to $\epsilon$ near $(\alpha,\,\beta,\,\epsilon)=(0,\,\beta(\epsilon_0),\,\epsilon_0)$. We take derivatives respect to $\epsilon$ on both sides of each of the equations at (\ref{re-um-3}) and then let $\alpha=0$. We obtain,
\begin{align}\label{re-um-2}
\left\{
\begin{aligned}
 \frac{\mathrm{d}\alpha}{\mathrm{d} \epsilon} & \left(\epsilon^*(\mu_m+\mu_p)+2{\epsilon^*}^2 f'(\xi^*)g'(r^*)\cos 2\beta\right)+ \frac{\mathrm{d}\beta}{\mathrm{d} \epsilon}\left(-2\beta+2{\epsilon^*}^2 f'(\xi^*)g'(r^*)\sin 2\beta\right)\\ &=-2\epsilon^*\mu_m\mu_p+2f'(\xi^*)g'(r^*) \cos2\beta,\\
  \frac{\mathrm{d}\alpha}{\mathrm{d} \epsilon} & \left(2\beta-2{\epsilon^*}^2 f'(\xi^*)g'(r^*)\sin 2\beta\right)+ \frac{\mathrm{d}\beta}{\mathrm{d} \epsilon} \left(\epsilon^*(\mu_m+\mu_p)+2{\epsilon^*}^2 f'(\xi^*)g'(r^*)\cos 2\beta\right)\\ &=-(\mu_m+\mu_p)\beta-2\epsilon^*f'(\xi^*)g'(r^*) \sin2\beta.
 \end{aligned}
\right.
\end{align}
Noticing that if $\alpha=0$, $\epsilon=\epsilon^*$,  (\ref{re-um-3}) implies that
\begin{align}\label{re-um-4}
\begin{cases}
{\epsilon^*}^2f'(\xi^*)g'(r^*) \cos2\beta & ={\epsilon^*}^2\mu_m\mu_p-\beta^2,\\
{\epsilon^*}^2f'(\xi^*)g'(r^*) \sin2\beta & =-{\epsilon^*}(\mu_m+\mu_p)\beta.
\end{cases}
\end{align}
Combining (\ref{re-um-2}) and (\ref{re-um-4}), we obtain that  
\begin{align}\label{phi-direction}
 \frac{\mathrm{d}\alpha}{\mathrm{d} \epsilon}\,\vline_{\epsilon=\epsilon^*,\,\lambda=\pm i\beta}=\frac{\frac{2\beta^2}{\epsilon^*}({\epsilon^*}^2(\mu_m^2+\mu_p^2)+2\beta^2)}{(\epsilon^*(\mu_m+\mu_p)+2{\epsilon^*}^2\mu_m\mu_p-2\beta^2)^2+(2\beta+2\beta\epsilon^*(\mu_m+\mu_p))^2}>0.
\end{align}
 To illustrate  the stability of system~(\ref{SDDE-general-1}), we have,
\begin{theorem}\label{thm3-3} Consider system~(\ref{SDDE-general-1}) with an equilibrium $(r^*,\,\xi^*)$, where $f,\,g: U\subset C^1([-\alpha_0,\,0];\mathbb{R}^2)\rightarrow\mathbb{R}$ are three times continuously differentiable and $U$ an open neighborhood of $(r^*,\,\xi^*)$; $\mu_m,\,\mu_p$ and $c$ are positive constants.   Assume (B1) and (B2) hold.   Then the following statements hold.
\begin{enumerate}
\item[i)]If $
 \mu_m\mu_p\geq -f'(\xi^*)g'(r^*)\geq 0,$  then the equilibrium of system~(\ref{SDDE-general-1}) is asymptotically stable.
\item[ii)] If  
$
 \mu_m\mu_p< -f'(\xi^*)g'(r^*),
 $ then there exists a unique $\epsilon_0\in (0,\,+\infty)$ such that 
  for every $\epsilon\in (0,\,\epsilon_0)$,  the equilibrium $(r^*,\,\xi^*)$  is asymptotically stable;  for every $\epsilon\in (\epsilon_0,\,+\infty)$,  the equilibrium $(r^*,\,\xi^*)$  is unstable, and there exists a sequence $\{\epsilon_k\}_{k=0}^\infty$ of critical values of $\epsilon$ for which  system~(\ref{SDDE-general-1}) undergoes Hopf bifurcation.
\end{enumerate}
\end{theorem}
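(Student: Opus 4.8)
The plan is to transfer the stability and bifurcation conclusions already obtained for the transformed constant-delay system (\ref{tranformed-general}) back to the original state-dependent delay system (\ref{SDDE-general-1}). The bridge is the time-domain transformation (\ref{smith-transform}), and the heart of the argument is to verify that this transformation is a genuine change of variables preserving asymptotic stability of the equilibrium and carrying a Hopf bifurcation of (\ref{tranformed-general}) to one of (\ref{SDDE-general-1}). First I would record that, under (B1) and (B2), the map $t\mapsto\eta(t)=\int_0^t\frac{1-cu(s)}{\epsilon}\,\mathrm{d}s$ is strictly increasing and $C^1$: assumption (B2) gives $1-c\dot x(s)>0$ for all $s>0$, hence $\mathrm{d}\eta/\mathrm{d}t=(1-c\dot x)/\epsilon>0$. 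Near the equilibrium $\dot x\to 0$ and $\mathrm{d}\eta/\mathrm{d}t\to 1/\epsilon$ is bounded away from $0$ and $\infty$, so $\eta(\cdot)$ is a $C^1$ diffeomorphism of $[0,\infty)$ onto $[0,\infty)$. Through (\ref{const-delay}) and (\ref{derive}) this puts solutions of (\ref{SDDE-general-1}) and (\ref{tranformed-general}) in one-to-one correspondence, the two systems share the same equilibria, and at $(r^*,\xi^*)$ the reparametrization reduces to the constant rescaling $\eta=t/\epsilon$.

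Next I would transfer the linear stability dichotomy. Since the reparametrization is an orientation-preserving $C^1$ diffeomorphism fixing the equilibrium and the initial time, the asymptotic behavior of a solution of (\ref{SDDE-general-1}) as $t\to\infty$ coincides with that of the corresponding solution of (\ref{tranformed-general}) as $\eta\to\infty$; in particular asymptotic stability of $(r^*,\xi^*)$ for one system is equivalent to its asymptotic stability for the other. This is the crucial point that lets me avoid the usual subtleties of linearizing a state-dependent delay equation directly: the linearization (\ref{linearized}) with characteristic equation (\ref{charac-1}) is the unambiguous linearization of the constant-delay system (\ref{tranformed-general}), and Lemma~\ref{lemma-1} already reads off its stability. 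Case~(i) of that lemma gives asymptotic stability for every $\epsilon$, and case~(ii) gives asymptotic stability for $\epsilon\in(0,\epsilon_0)$, instability for $\epsilon\in(\epsilon_0,\infty)$, and the sequence $\{\epsilon_k\}$ at which (\ref{charac-1}) has purely imaginary eigenvalues $i(\beta(\epsilon_0)+k\pi)$. Pulling these statements back through the diffeomorphism yields the stability and instability assertions in parts (i) and (ii).

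To obtain the Hopf bifurcation at each $\epsilon_k$, the remaining ingredients are in place. Lemma~\ref{lemma-1} supplies a simple pair of purely imaginary eigenvalues $\pm i(\beta(\epsilon_0)+k\pi)$ with no other roots of (\ref{charac-1}) on the imaginary axis at $\epsilon_k$, and the transversality inequality (\ref{phi-direction})---computed at $\epsilon^*=\epsilon_0$ and holding by the same calculation at each $\epsilon_k$---shows $\mathrm{d}\alpha/\mathrm{d}\epsilon>0$, so the eigenvalues cross the axis with nonzero speed. Because $f$ and $g$ are three times continuously differentiable, (\ref{tranformed-general}) is smooth enough to apply the classical Hopf bifurcation theorem for constant-delay equations, producing a branch of small-amplitude periodic solutions of (\ref{tranformed-general}). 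I would then push these orbits back to (\ref{SDDE-general-1}) through the inverse of $t\mapsto\eta(t)$, so that a periodic solution in $\eta$ becomes a periodic solution in $t$ with period scaled by the factor $\mathrm{d}t/\mathrm{d}\eta$.

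The main obstacle I anticipate is exactly this pull-back: justifying that a Hopf bifurcation of the constant-delay system is a genuine Hopf bifurcation of the state-dependent delay system, and not merely a formal correspondence. The difficulty is that the transformation (\ref{smith-transform}) depends on the unknown solution through $u=\dot x$, so mapping a periodic orbit back is a solution-dependent, not a fixed linear, rescaling; one must verify using (\ref{const-delay})--(\ref{derive}) that the rescaled function indeed solves (\ref{SDDE-general-1}), that it is genuinely periodic, and that the branch depends $C^1$ on $\epsilon$ and on the amplitude so that the bifurcation is non-degenerate. Following the threshold-type reduction of \cite{HKT,BHK,smith1993}, I would argue that the correspondence is an exact change of variables for the full nonlinear solutions---not only for the linearization---so that periodicity and the bifurcation structure transfer intact; the invertibility and smoothness guaranteed by (B1)--(B2) are precisely what make this rigorous.
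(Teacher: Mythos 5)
Your argument reaches the right conclusion, but it takes a genuinely different (and heavier) route than the paper. The paper's proof is essentially two citations: for the stability dichotomy it invokes the \emph{formal linearization} principle for state-dependent delay equations \cite{Cooke1996,Walther2004}, which says directly that the local stability of the equilibrium of~(\ref{SDDE-general-1}) is governed by the characteristic equation obtained by freezing the delay at its equilibrium value $\epsilon$, i.e.\ by~(\ref{charac-1}) via system~(\ref{SDDE-general-c-0}); Lemma~\ref{lemma-1} then gives parts (i) and (ii). For the bifurcation it does not push a classical Hopf theorem through the time change at all, but cites the Hopf bifurcation theorem for threshold-type state-dependent delays established in \cite{BHK}. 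You instead try to make the transformation~(\ref{smith-transform}) do all the work: a solution-by-solution change of variables carrying stability and periodic orbits between~(\ref{SDDE-general-1}) and~(\ref{tranformed-general}). The pull-back of a single periodic orbit is fine (indeed $t(\eta)=\epsilon\eta+c(r(\eta)-r(0))$, so an $\eta$-period $P$ gives a $t$-period exactly $\epsilon P$), and your use of Lemma~\ref{lemma-1} and the transversality condition~(\ref{phi-direction}) matches the paper. The one place where your route is thinner than it looks is the claim that asymptotic stability is \emph{equivalent} under the reparametrization: the time change depends on the unknown solution through $u=\dot x$, so you need a uniform, solution-independent correspondence between $C^1$ neighborhoods of the equilibrium in the two phase spaces (initial segments on $[-\tau_0,0]$ versus $[-1,0]$), continuous in both directions, before "stays close and converges in $\eta$" can be traded for the same statement in $t$. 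You flag this as the main obstacle and defer to \cite{HKT,BHK,smith1993}, which is legitimate, but it is precisely the step the paper's appeal to \cite{Cooke1996,Walther2004} is designed to bypass. What your approach buys is a self-contained, constructive picture of why the reduction works; what the paper's approach buys is brevity and the avoidance of the delicate uniformity argument, at the cost of leaning on external theorems tailored to state-dependent delays.
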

\begin{proof} By the technique of formal linearization \cite{Cooke1996,Walther2004}, we know that the equilibrium stability of system~(\ref{SDDE-general-1}) is equivalent to that of system~(\ref{SDDE-general-c-0}) which has the same equilibrium and characteristic equation for system~(\ref{tranformed-general}).
%
%
Then by Lemma~\ref{lemma-1},   the stability of   $(r^*,\,\xi^*)$ of system~(\ref{tranformed-general})  implies that of system~(\ref{SDDE-general-1}).  Existence of Hopf bifurcation follows from the Hopf bifurcation theorem established in \cite{BHK}.
\qed
\end{proof}

  Let $x(\eta)=(u(\eta),\,v(\eta))$ and system (\ref{tranformed-general})  is written
\[
\frac{1}{\epsilon}\dot{x}(\eta)=\begin{bmatrix}
S((x(\eta),x(\eta-1))\\
H((x(\eta),x(\eta-1))
\end{bmatrix},
\]
where $(S,\,H)$ denotes the right hand side of (\ref{eqn-5-2}).   
We have the Taylor expansion of (\ref{eqn-5-2}) up to the cubic terms,
\begin{align}\label{Taylor}
\left\{
\begin{aligned}
\frac{1}{\epsilon}\frac{\mathrm{d}u}{\mathrm{d}\eta}=&-\mu_mu(\eta)+f'(\xi^*)v(\eta-1)+ c\mu_{m}^2 u^2(\eta)-2c\mu_mf'(\xi^*)u(\eta)v(\eta-1)\\
     &+\left(\frac{1}{2}f''(\xi^*)+cf'^2(\xi^*)\right)v^2(\eta-1)-c^2\mu_m^3  u^3(\eta)+3c^2\mu_m^2f'(\xi^*) u^2(\eta)v(\eta-1) \\
       & -(c\mu_m f''(\xi^*)+3c^2\mu_m f'^2(\xi^*))  u(\eta) v^2(\eta-1) \\ & + \left(\frac{1}{6}f'''(\xi^*)+cf''(\xi^*) f'(\xi^*)+c^2f'^3(\xi^*)\right)v^3(\eta-1),\\
\frac{1}{\epsilon}\frac{\mathrm{d}v}{\mathrm{d}\eta}=&-\mu_p v(\eta)+g'(r^*)u(\eta-1)+\frac{1}{2}\left[-2c\mu_p f'(\xi^*)v(\eta) v(\eta-1)+2c\mu_m\mu_p u(\eta)  v(\eta)\right.\\
         & \left.  +g''(r^*)u^2(\eta-1)+2cf'(\xi^*)g'(r^*)u(\eta-1)v(\eta-1)-2c\mu_m g'(r^*)u u(\eta-1) \right]\\
         &+\frac{1}{6}\left[g'''(r^*)  u^3(\eta-1)-6c^2\mu_m^2\mu_p  u^2(\eta) v(\eta)+6c^2\mu_m^2g'(r^*) u^2(\eta) u(\eta-1)\right.\\
         & \left.-3c\mu_mg''(r^*)u(\eta) u^2(\eta-1)+3cg''(r^*)f'(\xi^*)u^2(\eta-1)v(\eta-1)\right.\\
         & \left.
       -3(c\mu_p  f''(\xi^*)+2c^2\mu_pf'^2(\xi^*))  v(\eta) v^2(\eta-1)\right.\\
         & \left.
        +3(cf''(\xi^*)g'(r^*)+2c^2f'^2(\xi^*)g'(r^*))u(\eta-1)v^2(\eta-1)\right.\\
        &\left.+12c^2\mu_m\mu_p f'(\xi^*)u(\eta) v(\eta) v(\eta-1)) -12c^2\mu_mg'(r^*)f'(\xi^*)  u(\eta) u(\eta-1)v(\eta-1)  \right]. 
        \end{aligned}
\right.
\end{align}

\subsection{Normal form computation via multiple time scale}
In this section we compute the normal form of system~(\ref{eqn-5-2}) using its Taylor expansion at (\ref{Taylor}). We seek a uniform second-order approximate solution in the form
\begin{align}\label{mts-1}
x(\eta;\, s)=sx_1(T_0,\,T_2)+s^2x_2(T_0,\,T_2)+s^3x_3(T_0,\,T_2)+\cdots,
\end{align}
where $T_0=\eta$, $T_2=s^2\eta$, and $s$ is a nondimensional book keeping parameter. In the following, we write $x_i=(u_i,\,v_i),\,i=1,\,2,\,\cdots$. It is not necessary to relate the approximate solution with the time scale $T_1=s\eta$ because a second order approximation will require independence of $T_1$ in the solution (See, e.g., \cite{Nayfeh}, page 166).
\begin{align}\label{mts-2}
\frac{\mathrm{d}}{\mathrm{d}\eta}=\frac{\partial}{\partial T_0}+s^2\frac{\partial }{\partial T_2}+\cdots=D_0+s^2 D_2+\cdots.
\end{align}
By (\ref{mts-1}) and (\ref{mts-2}) we have  
\begin{align}\label{mts-3}
x(\eta-1)&=x(T_0-1,\,T_2-s^2) =\sum_{n=1}^\infty s^n x_n(T_0-1,\,T_2)-s^3 D_2 x_1(T_0-1,\,T_2)\cdots.
\end{align}
Let $\epsilon=\epsilon^*+s^2\delta$ where $\epsilon^*$ is the critical value for Hopf bifurcation and $\delta$ a small detuning parameter.  Bringing  (\ref{mts-1}) and (\ref{mts-3}) into  (\ref{Taylor}), we have the left hand side been transformed into
\begin{align*}
 &\frac{1}{(\epsilon^*+s^2\delta)}(D_0+s^2 D_2)\sum^{\infty}_{n=1}s^n x_n(T_0,\,T_2)
\\ = & \frac{1}{(\epsilon^*+s^2\delta)}\left[ s D_0 x_1(T_0,\,T_2)+s^2D_0x_2(T_0,\,T_2)\right.\\
& \left.+ s^3D_0 x_3(T_0,\, T_2)+ s^3 D_2 x_1(T_0,\,T_2)+s^4 D_2 x_2(T_0,\,T_2)+\cdots\right],
\end{align*}
and the right hand side becomes
\begin{align*}
M\sum^{\infty}_{n=1}s^n x_n (T_0,\,T_2) +N\left(\sum_{n=1}^\infty s^n x_n(T_0-1,\,T_2)-s^3D_2 x_1(T_0-1,\,T_2)+\cdots\right)+\cdots.
\end{align*} Comparing the like powers of $s$ up to the cubic in the transformed equation of  (\ref{Taylor})  yield the following equations, where for notational simplicity we omitted the time scales of the variables except for the delayed ones and use dashed lines to separate entries of the column matrices.
\begin{align}
  & D_0 x_1-\epsilon^*Mx_1-\epsilon^*Nx_1(T_0-1,\,T_2) =0,\label{s-eqn-1}  \\
  & D_0 x_2-\epsilon^*Mx_2-\epsilon^*Nx_2(T_0-1,\,T_2) =\notag \\
  & \quad\quad \frac{\epsilon^* }{2}
\begin{bmatrix*}[l]
& 2c\mu_m^2 u_1^2-4c\mu_m f'(\xi^*)u_1v_1(T_0-1, T_2)+[f''(\xi^*)+2cf'^2(\xi^*)]v_1^2(T_0-1, T_2)\\
      \hdashline
& 2cf'(\xi^*)g'(r^*)u_1(T_0-1)v_1(T_0-1)-2c\mu_mg'(r^*)u_1u_1(T_0-1)\\
 &\quad\quad-2c\mu_p f'(\xi^*)  v_1 v_1(T_0-1)+2c\mu_m\mu_p u_1 v_1 +g''(r^*)u_1^2(T_0-1)\\
\end{bmatrix*},\label{s-eqn-2}\\
 & D_0 x_3-\epsilon^{*}Mx_3-\epsilon^*Nx_3(T_0-1,\,T_2) = -D_2 x_1-\epsilon^*N D_2x_1(T_0-1,\,T_2)\notag\\
&\quad\quad  +\delta (Mx_1+Nx_1(T_0-1,\,T_2))\notag\\
        & \quad\quad+\frac{\epsilon^*}{6}
        \begin{bmatrix*}[l]
        -6c^2\mu_m^3u_1^3+18c^2\mu_m^2f'(\xi^*)u_1^2v_1(T_0-1,\,T_2)\\
        \quad\quad-(6c\mu_m f''(\xi^*)+18c^2\mu_m f'^2(\xi^*))u_1 v_1^2(T_0-1,T_2)\\
        \quad\quad+(f'''(\xi^*)+6cf''(\xi^*)f'(\xi^*)+6c^2f'^3(\xi^*))v_1^3(T_0-1,\,T_2)\\
        \hdashline
        g'''(r^*)  u_1^3(T_0-1, T_2)-6c^2\mu_m^2\mu_p  u_1^2  v_1 +6c^2\mu_m^2g'(r^*) u_1^2  u_1(T_0-1, T_2) \\
          \quad\quad  -3c\mu_mg''(r^*)u_1 u_1^2(T_0-1, T_2)\\
          \quad\quad +3cg''(r^*)f'(\xi^*)u_1^2(T_0-1, T_2)v_1(T_0-1, T_2) \\
       \quad\quad  
       -3(c\mu_p  f''(\xi^*)+2c^2\mu_pf'^2(\xi^*))  v_1 v_1^2(T_0-1, T_2) \\
        \quad\quad 
        +3(cf''(\xi^*)g'(r^*)+2c^2f'^2(\xi^*)g'(r^*))u_1(T_0-1, T_2)v_1^2(T_0-1, T_2) \\
         \quad\quad +12c^2\mu_m\mu_p f'(\xi^*)u_1 v_1 v_1(T_0-1, T_2) \\  \quad\quad-12c^2\mu_mg'(r^*)f'(\xi^*)  u_1 u_1(T_0-1, T_2)v_1(T_0-1, T_2)\\
        \end{bmatrix*}\notag\\
        &\quad\quad +\frac{\epsilon^*}{2} 
        \begin{bmatrix*}[l]
        2c\mu_m^2(u_1u_2+u_2u_1)-4c\mu_m f'(\xi^*)(u_2v_1(T_0-1, T_2)+u_1v_2(T_0-1, T_2))\\
      \quad\quad  +2(f''(\xi^*)+2cf'^2(\xi^*)))v_1(T_0-1, T_2)v_2(T_0-1)\\
            \hdashline
       -2c\mu_p f'(\xi^*)(v_1v_2(T_0-1, T_2)+v_2v_1(T_0-1, T_2))+2c\mu_m\mu_p(u_2v_1+u_1v_2)\\
      \quad\quad  +2g''(r^*)u_1(T_0-1,T_2)u_2(T_0-1, T_2)\\
      \quad\quad +2c f'(\xi^*)g'(r^*)(u_1(T_0-1, T_2)v_2(T_0-1, T_2)\\
      \quad\quad+u_2(T_0-1, T_2)v_1(T_0-1, T_2)))\\
       \quad\quad -2c\mu_m g'(r^*)(u_2u_1(T_0-1, T_2)+u_1u_2(T_0-1, T_2))
        \end{bmatrix*}.\label{s-eqn-1}
\end{align}

We know that when $\epsilon=\epsilon^*$ system~(\ref{eqn-5-2}) undergoes Hopf bifurcation and   (\ref{s-eqn-1}) has periodic solution of the form 
\[
x_1=A(T_2)\theta e^{iw^*T_0}+\bar{A}(T_2)\bar{\theta}e^{-iw^*T_0},
\] where $iw^*$ is a purely imaginary eigenvalue with eigenvector $\theta=[\theta_1,\,\theta_2]^T\neq 0$ which satisfies
\begin{align*}
\left(iw^*I+\epsilon^*\begin{bmatrix}
\mu_m & 0\\
0 & \mu_p \end{bmatrix}
-\epsilon^*\begin{bmatrix}
0 & f'(\xi^*)e^{-iw^*}\\
g'(r^*)e^{-iw^*} & 0
\end{bmatrix}\right)
\left(\begin{matrix}\theta_1\\ \theta_2 \end{matrix}\right)&=0.
\end{align*}
Leting $\theta_1=1$ in the above equation we have
\begin{align*}
 \theta=\left[
                       \begin{matrix}
                       1\\
                       \frac{e^{i w^*}(i w^*+\epsilon^*\mu_m)}{\epsilon^* f'(\xi^*)}
                       \end{matrix}
                       \right].                   
\end{align*}
Substitute $x_1$ into (\ref{s-eqn-2}), we have 
\begin{align} \label{eqn-5-10}
& D_0 x_2-\epsilon^*Mx_2-\epsilon^*Nx_2(T_0-1,\,T_2)\notag\\
=&\frac{\epsilon^*}{2}\begin{bmatrix*}[l]
&2c\mu_m^2(A e^{i w^*T_0}+\bar{A}e^{-i w^*T_0})^2\\
&-4c\mu_m f'(\xi^*)(Ae^{iw^*T_0}+\bar{A}e^{-iw^* T_0}) (A\theta_2 e^{iw^*(T_0-1)}+\bar{A}\bar{\theta_2}e^{-iw^*(T_0-1)})\\
& +[f''(\xi^*)+2cf'^2(\xi^*)]\cdot (A\theta_2e^{iw^*(T_0-1)}+\bar{A}\bar{\theta_2}e^{-iw^*(T_0-1)})^2\\
    \hdashline
    &-2c\mu_pf'(\xi^*)(A\theta_2e^{iw^*T_0}+\bar{A}\bar{\theta_2}e^{-iw^*T_0})(A\theta_2e^{iw^*(T_0-1)}+\bar{A}\bar{\theta_2}e^{-iw^*(T_0-1)})\\
    &+2c\mu_m\mu_p(Ae^{iw^*T_0}+\bar{A}e^{-iw^*T_0})(A\theta_2e^{iw^*T_0}+\bar{A}\bar{\theta_2}e^{-iw^*T_0})\\
    &+g''(r^*)(Ae^{iw^*(T_0-1)}+\bar{A}e^{-iw^*(T_0-1)})^2\\
    &+2cf'(\xi^*)g'(r^*)(Ae^{iw^*(T_0-1)}+\bar{A}e^{-iw^*(T_0-1)})(A\theta_2e^{iw^*(T_0-1)}+\bar{A}\bar{\theta_2}e^{-iw^*(T_0-1)})\\
    &-2c\mu_mg'(r^*)(Ae^{iw^*T_0}+\bar{A}e^{-iw^*T_0})(Ae^{iw^*(T_0-1)}+\bar{A}e^{-iw^*(T_0-1)})
     \end{bmatrix*}.
\end{align}
Equation  (\ref{s-eqn-2}) has a particular solution of the form
\[x_2=aA^2e^{i2w^*T_0}+bA\bar{A}+\bar{a}\bar{A}^2e^{-i2 w^*T_0},\]
where $a=(a_1,\,a_2)\in\mathbb{C}^2,\,b=(b_1,\,b_2)\in\mathbb{C}^2.$
Bringing the undetermined form of $x_2$ into (\ref{eqn-5-10}),  we obtain
\begin{align}\label{eqn-5-12}
&(aA^2e^{i2w^*T_0}\cdot i2w^*+\bar{a}\bar{A}^2e^{-i2w^*T_0}(-i2w^*))\\
& +\epsilon^*\left[\begin{matrix} \mu_m & 0\notag\\
                        0 & \mu_p \end{matrix}\right]
     (aA^2e^{i2w^*T_0}+bA\bar{A}+\bar{a}\bar{A}^2e^{-i2w^*T_0})\notag\\
&-\epsilon^*\left[\begin{matrix} 0 & f'(\xi^*)\\
                           g'(r^*)  & 0 \end{matrix}\right]
     (aA^2e^{i2w^*(T_0-1)}+bA \bar{A}+\bar{a} \bar{A}^2 e^{-i2w^*(T_0-1)})\notag\\
     =& \frac{\epsilon^*}{2}\begin{bmatrix*}[l]
&2c\mu_m^2(A e^{i w^*T_0}+\bar{A}e^{-i w^*T_0})^2\\
&-4c\mu_m f'(\xi^*)(Ae^{iw^*T_0}+\bar{A}e^{-iw^* T_0}) (A\theta_2 e^{iw^*(T_0-1)}+\bar{A}\bar{\theta_2}e^{-iw^*(T_0-1)}) \\
&+[f''(\xi^*)+2cf'^2(\xi^*)]\cdot (A\theta_2e^{iw^*(T_0-1)}+\bar{A}\bar{\theta_2}e^{-iw^*(T_0-1)})^2\\
    \hdashline
    &-2c\mu_pf'(\xi^*)(A\theta_2e^{iw^*T_0}+\bar{A}\bar{\theta_2}e^{-iw^*T_0})(A\theta_2e^{iw^*(T_0-1)}+\bar{A}\bar{\theta_2}e^{-iw^*(T_0-1)})\\
    &+2c\mu_m\mu_p(Ae^{iw^*T_0}+\bar{A}e^{-iw^*T_0})(A\theta_2e^{iw^*T_0}+\bar{A}\bar{\theta_2}e^{-iw^*T_0})\\
    &+g''(r^*)(Ae^{iw^*(T_0-1)}+\bar{A}e^{-iw^*(T_0-1)})^2\\
    &+2cf'(\xi^*)g'(r^*)(Ae^{iw^*(T_0-1)}+\bar{A}e^{-iw^*(T_0-1)})(A\theta_2e^{iw^*(T_0-1)}+\bar{A}\bar{\theta_2}e^{-iw^*(T_0-1)})\\
    &-2c\mu_mg'(r^*)(Ae^{iw^*T_0}+\bar{A}e^{-iw^*T_0})(Ae^{iw^*(T_0-1)}+\bar{A}e^{-iw^*(T_0-1)})
     \end{bmatrix*}.
\end{align}
Equating the coefficients of the terms with $e^{i2w^*T_0}$  from both sides of (\ref{eqn-5-12}), we obtain
\begin{align}\label{eqn-5-13}
\left\{
\begin{aligned}
   & (i2w^*)A^2a_1+\epsilon^*\mu_mA^2a_1-\epsilon^*f'(\xi^*)A^2e^{-i2w^*}a_2 \\
   =  &  \epsilon^*c\mu_m^2A^2-2\epsilon^*c\mu_mf'(\xi^*)A^2\theta_2e^{-iw^*}   
+\frac{\epsilon^*}{2}(f''(\xi^*)+2cf'^2(\xi^*))A^2\theta_2^2e^{-i2w^*},\\
& (i2w^*)A^2a_2+\epsilon^*\mu_pA^2a_2-\epsilon^*g'(r^*)A^2e^{-i2w^*}a_1 \\
= &  -\epsilon^*c\mu_pf'(\xi^*)\theta^2_2A^2e^{-iw^*}+\epsilon^*c\mu_m\mu_pA^2\theta_2  +\frac{\epsilon^*}{2}g''(r^*)A^2e^{-2iw^*}\\
  & +\epsilon^*cf'(\xi^*)g'(r^*)A^2\theta_2e^{-2iw^*}  -\epsilon^*c\mu_mg'(r^*)A^2e^{-iw^*}.
    \end{aligned}
        \right.
\end{align}
Equation (\ref{eqn-5-13}) has a solution  for $a$ if $i2w^*$ is not an eigenvalue of (\ref{linearized}). 
\begin{enumerate}
\item[(B3)] $i2w^*$ is not an eigenvalue of (\ref{linearized}). 
\end{enumerate}
We have
\allowbreak
\begin{align}\label{eqn-b-1-2}
\left\{
\begin{aligned}
a_1= &\frac{\epsilon^*}{ (i2w^*+\epsilon^*\mu_m)(i2w^*+\epsilon^*\mu_p) -{\epsilon^*}^2f'(\xi^*)g'(r^*)e^{-i4w^*} }\\
&\times\bigg[  \left(  (c\mu_m^2-2c\mu_mf'(\xi^*)\theta_2e^{-iw^*} +\frac{1}{2}(f''(\xi^*)+2cf'^2(\xi^*))\theta_2^2e^{-i2w^*} \right)(i2w^*+\epsilon^*\mu_p) \\
  &                                      +\bigg( - c\mu_pf'(\xi^*)\theta^2_2 e^{-iw^*}+ c\mu_m\mu_p \theta_2   +\frac{1}{2}g''(r^*) e^{-2iw^*}\\
   &                                              + cf'(\xi^*)g'(r^*) \theta_2e^{-2iw^*}
                                                   - c\mu_mg'(r^*) e^{-iw^*}\bigg)(\epsilon^*f'(\xi^*)e^{-i2w^*})   \bigg],\\
a_2= &\frac{\epsilon^*}{ (i2w^*+\epsilon^*\mu_m)(i2w^*+\epsilon^*\mu_p) -{\epsilon^*}^2f'(\xi^*)g'(r^*)e^{-i4w^*} }\\
&\times\bigg[  \left(  (c\mu_m^2-2c\mu_mf'(\xi^*)\theta_2e^{-iw^*} +\frac{1}{2}(f''(\xi^*)+2cf'^2(\xi^*))\theta_2^2e^{-i2w^*} \right)(\epsilon^*g'(r^*)e^{-i2w^*} ) \\
  &                                        +\bigg( - c\mu_pf'(\xi^*)\theta^2_2 e^{-iw^*}+ c\mu_m\mu_p \theta_2   +\frac{1}{2}g''(r^*) e^{-2iw^*}\\
   &                                              + cf'(\xi^*)g'(r^*) \theta_2e^{-2iw^*}
                                                   - c\mu_mg'(r^*) e^{-iw^*}\bigg)(i2w^*+\epsilon^*\mu_m) \bigg]. 
\end{aligned} 
\right.                                       
\end{align}
Equating the coefficients of the $A\bar{A}$ terms from both sides of (\ref{eqn-5-12}), we obtain,
\begin{align*}
\left\{
\begin{aligned}
\epsilon^*\mu_mb_1-\epsilon^*f'(\xi^*)b_2 =&\,\, 2\epsilon^*c\mu_m^2-2\epsilon^*c\mu_mf'(\xi^*)(\bar{\theta_2}e^{iw^*}+\theta_2e^{-iw^*})\\
&\,\,+\epsilon^*(f''(\xi^*)+2cf'^2(\xi^*))\theta_2\bar{\theta_2},\\
-\epsilon^*g'(r^*)b_1+\epsilon^*\mu_pb_2 =&\,\, -\epsilon^*c\mu_pf'(\xi^*)\theta_2\bar{\theta_2}(e^{iw^*}+e^{-iw^*})+\epsilon^*c\mu_m\mu_p(\theta_2+\bar{\theta_2})\\
                                                                 &+\epsilon^*g''(r^*)+\epsilon^*cf'(\xi^*)g'(r^*)(\theta_2+\bar{\theta_2})-\epsilon^*c\mu_mg'(r^*)(e^{iw^*}+e^{-iw^*}).
 \end{aligned}
 \right.
\end{align*}Noticing that $\theta_2=  \frac{e^{i w^*}(i w^*+\epsilon^*\mu_m)}{\epsilon^* f'(\xi^*)}$ and hence $\theta_2e^{-iw^*}+\bar{\theta}_2e^{iw^*}=\frac{2\mu_m}{f'(\xi^*)}$, we have
\begin{align*}
\left\{
\begin{aligned}
b_1=&\frac{1}{{\epsilon^*}^2f'^2(\xi^*)(\mu_m\mu_p-f'(\xi^*)g'(r^*))} \bigg[
\mu_pf''(\xi^*){w^*}^2+\mu_pf''(\xi^*){\epsilon^*}^2\mu_m^2+2f'^2(\xi^*)c\mu_p{w^*}^2\\
&-2f'^2(\xi^*)c\mu_p{w^*}^2\cos w^*-2c(f'^3(\xi^*)g'(r^*)+\mu_m\mu_pf'^2(\xi^*)){\epsilon^*}{w^*}\sin w^*\\
& +f'^3(\xi^*)g''(r^*){\epsilon^*}^2 \bigg],\\
b_2=&\frac{1}{{\epsilon^*}^2f'^2(\xi^*)(\mu_m\mu_p-f'(\xi^*)g'(r^*))} \bigg[
\mu_mf'^2(\xi^*)g''(r^*){\epsilon^*}^2+ f''(\xi^*)g'(r^*){w^*}^2\\
& +f''(\xi^*)g'(r^*)\mu_m^2{\epsilon^*}^2+2cf'^2(\xi^*)g'(r^*){w^*}^2-2c\mu_m\mu_pf'(\xi^*){w^*}^2\cos w^*\\
&-2c(\mu_mf'^2(\xi^*)g'(r^*)+\mu_m^2\mu_pf'(\xi^*)){\epsilon^*}{w^*}\sin w^* \bigg].
\end{aligned}  
\right.
\end{align*}


Substituting the expressions of $x_1$ and $x_2$ into (4.9), we obtain,
\begin{align}\label{eqn-s-514}
&D_0x_3-\epsilon^*Mx_3-\epsilon^*Nx_3(T_0-1,\,T_2)\notag\\
=&-D_2(A(T_2)\theta e^{i w^*T_0}+cc)-\epsilon^*ND_2(\theta A(T_2)e^{i w^*(T_0-1)}+cc)\notag\\
\quad\quad &+\delta (M(A(T_2)\theta e^{i w^*T_0}+cc))+N(A(T_2)\theta e^{i w^*(T_0-1)}+cc)\notag\\
  & \quad\quad+\frac{\epsilon^*}{6}
        \begin{bmatrix*}[l]
        -6c^2\mu_m^3u_1^3+18c^2\mu_m^2f'(\xi^*)u_1^2v_1(T_0-1,\,T_2)\\
        \quad\quad-(6c\mu_m f''(\xi^*)+18c^2\mu_m f'^2(\xi^*))u_1 v_1^2(T_0-1,T_2)\\
        \quad\quad+(f'''(\xi^*)+6cf''(\xi^*)f'(\xi^*)+6c^2f'^3(\xi^*))v_1^3(T_0-1,\,T_2)\\
        \hdashline
        g'''(r^*)  u_1^3(T_0-1, T_2)-6c^2\mu_m^2\mu_p  u_1^2  v_1 +6c^2\mu_m^2g'(r^*) u_1^2  u_1(T_0-1, T_2) \\
          \quad\quad  -3c\mu_mg''(r^*)u_1 u_1^2(T_0-1, T_2)\\
          \quad\quad +3cg''(r^*)f'(\xi^*)u_1^2(T_0-1, T_2)v_1(T_0-1, T_2) \\
       \quad\quad  
       -3(c\mu_p  f''(\xi^*)+2c^2\mu_pf'^2(\xi^*))  v_1 v_1^2(T_0-1, T_2) \\
        \quad\quad 
        +3(cf''(\xi^*)g'(r^*)+2c^2f'^2(\xi^*)g'(r^*))u_1(T_0-1, T_2)v_1^2(T_0-1, T_2) \\
         \quad\quad +12c^2\mu_m\mu_p f'(\xi^*)u_1 v_1 v_1(T_0-1, T_2) \\  \quad\quad-12c^2\mu_mg'(r^*)f'(\xi^*)  u_1 u_1(T_0-1, T_2)v_1(T_0-1, T_2)\\
        \end{bmatrix*}\notag\\
        &\quad\quad +\frac{\epsilon^*}{2} 
        \begin{bmatrix*}[l]
        2c\mu_m^2(u_1u_2+u_2u_1)-4c\mu_m f'(\xi^*)(u_2v_1(T_0-1, T_2)+u_1v_2(T_0-1, T_2))\\
      \quad\quad  +2(f''(\xi^*)+2cf'^2(\xi^*)))v_1(T_0-1, T_2)v_2(T_0-1)\\
            \hdashline
       -2c\mu_p f'(\xi^*)(v_1v_2(T_0-1, T_2)+v_2v_1(T_0-1, T_2))+2c\mu_m\mu_p(u_2v_1+u_1v_2)\\
      \quad\quad  +2g''(r^*)u_1(T_0-1,T_2)u_2(T_0-1, T_2)\\
      \quad\quad +2c f'(\xi^*)g'(r^*)(u_1(T_0-1, T_2)v_2(T_0-1, T_2)\\
      \quad\quad+u_2(T_0-1, T_2)v_1(T_0-1, T_2)))\\
       \quad\quad -2c\mu_m g'(r^*)(u_2u_1(T_0-1, T_2)+u_1u_2(T_0-1, T_2))
        \end{bmatrix*}.
        \end{align}
Note that $iw^*$ is an eigenvalue of the homogenous system corresponding to (\ref{eqn-s-514}) and the right hand side of the nonhomogeneous system  (\ref{eqn-s-514}) has terms with $e^{iw^*T_0}$. To remove possible secular terms  in the solutions, we  seek a particular solution of the form $x_3(T_0,\,T_2)=\phi(T_2)e^{iw^*T_0}$, where the existence of $\phi$ requires  solvability     
 of the algebraic equation
 \begin{align}\label{eqn-s-515}
 \left[iw^*I-\epsilon^*M-\epsilon^*Ne^{-iw^*}\right]\phi = \chi.
 \end{align}where $\chi$ stands for the coefficients of the $e^{iw^*T_0} $ term in (\ref{eqn-s-514}). System (\ref{eqn-s-515}) is solvable for $\phi$ if and only if $\chi$ is orthogonal to
the null space of the conjugate transpose of the coefficient matrix. That is, for every vector $d$ which satisfies
\begin{align*}
 \left[-iw^*I-\epsilon^*M^T-\epsilon^*N^Te^{iw^*}\right]d = 0,
 \end{align*}    we require
  \begin{align}\label{eqn-s-516}
\bar{d}^T\chi=0,
 \end{align}which is an ordinary differential equation of $A$ and is the normal form. We can obtain $d$ as following with the condition $\bar{d}^T\theta=1$ imposed for uniqueness:
 \begin{align*}
 d=\frac{1}{-2iw^*+\epsilon^*(\mu_m+\mu_p)} 
 \begin{bmatrix}
 {-iw^*+\epsilon^*\mu_p}\\
 \epsilon^*e^{iw^*}f'(\xi^*)
 \end{bmatrix}.
 \end{align*}
 
 Bringing the expressions of $x_1$ and $x_2$ into system (\ref{eqn-s-514}), we have
the following expression for $\chi:$
\begin{align}\label{eqn-s-517}
\chi=
&- \theta A'-\epsilon^*e^{-iw^*}N\theta A'+\delta M\theta A+\delta N\theta Ae^{-iw^*}
\\
 &+\frac{\epsilon^*}{2}A^2\bar{A}
\begin{bmatrix*}[l]
&-6c^2\mu_m^3+6c^2\mu_m^2f'(\xi^*)(2\theta_2e^{-iw^*}+ \bar{\theta}_2e^{iw^*})-2c\mu_m(f''(\xi^*)+3cf'^2(\xi^*))\\
& \cdot\theta_2(\theta_2e^{-2iw^*}+ 2\bar{\theta}_2)+(f'''(\xi^*)+6cf''(\xi^*)f'(\xi^*)+6c^2f'^3(\xi^*))\theta_2^2\bar{\theta}_2e^{-iw^*}\\
\hdashline
& g'''(r^*)e^{-iw^*}-2c^2\mu_m^2\mu_p(\bar{\theta}_2+2\theta_2)+2c^2\mu_m^2g'(r^*)(2e^{-iw^*}+e^{iw^*})\\
&-c\mu_mg''(r^*)(2+e^{-2iw^*})+cg''(r^*)f'(\xi^*)(2\theta_2e^{-iw^*}+ \bar{\theta}_2e^{-iw^*})\\
&-c\mu_p(f''(\xi^*)+2cf'(\xi^*))\theta_2^2\bar{\theta}_2(2+e^{-2iw^*})+cg'(r^*)(f''(\xi^*)+2cf'^2(\xi^*))\\
&\cdot(2\bar{\theta}_2+\theta_2)\theta_2e^{-iw^*}+4c^2\mu_m\mu_pf'(\xi^*)\theta_2(\bar{\theta}_2e^{iw^*}+ \bar{\theta}_2e^{-iw^*}+\theta_2e^{-iw^*})\\
&-4c^2\mu_mf'(\xi^*)g'(r^*) (\bar{\theta}_2 + {\theta}_2+  \theta_2e^{-2iw})
\end{bmatrix*}\\
&+\epsilon^*A^2\bar{A}
\begin{bmatrix*}[l]
& 2c\mu_m^2 (a_1+b_1)-2c\mu_mf'(\xi^*)(a_1\bar{\theta}_2e^{iw}+b_1{\theta}_2e^{-iw}+b_2+a_2e^{-2iw^*}) \\
&+(f''(\xi^*)+2cf'^2(\xi^*))(a_2\bar{\theta}_2+b_2\theta_2)e^{-iw^*}\\
\hdashline
& -c\mu_pf'(\xi^*)(b_2\theta_2+a_2\bar{\theta}_2e^{-2iw^*}+b_2\theta_2e^{-iw^*}+a_2\bar{\theta}_2e^{iw^*})\\
&+c\mu_m\mu_p(b_1\theta_2+a_1\bar{\theta}_2+a_2+b_2)+g''(r^*)e^{-iw^*}(a_1+b_1)\\
&+ cf'(\xi^*)g'(r^*)(b_2e^{-iw^*}+a_2 e^{-iw^*}+b_1\theta_2e^{-iw^*}+a_1\bar{\theta}_2 e^{-iw^*})\\
&- c\mu_mg'(r^*)(b_1e^{-iw^*}+a_1 e^{iw^*}+b_1+a_1 e^{-2iw^*})
\end{bmatrix*}.
\end{align}
By (\ref{eqn-s-516}) and  (\ref{eqn-s-517}) and noticing that   $\bar{d}^T\theta=1$ and
\[
iw^*\theta-\epsilon^*M\theta-\epsilon^*N\theta e^{-iw^*}=0,
\]
we have the following normal form:
\begin{align}
& A'(1+\epsilon^*e^{-iw^*}\bar{d}^TN\theta)\notag
\\
 &=   \frac{iw^*}{\epsilon^*}\delta A\notag\\
&+\frac{\epsilon^*}{2}A^2\bar{A}\bar{d}^T\begin{bmatrix*}[l]
&-6c^2\mu_m^3+6c^2\mu_m^2f'(\xi^*)(2\theta_2e^{-iw^*}+ \bar{\theta}_2e^{iw^*})-2c\mu_m(f''(\xi^*)+3cf'^2(\xi^*))\\
& \cdot\theta_2(\theta_2e^{-2iw^*}+ 2\bar{\theta}_2)+(f'''(\xi^*)+6cf''(\xi^*)f'(\xi^*)+6c^2f'^3(\xi^*))\theta_2^2\bar{\theta}_2e^{-iw^*}\\
\hdashline
& g'''(r^*)e^{-iw^*}-2c^2\mu_m^2\mu_p(\bar{\theta}_2+2\theta_2)+2c^2\mu_m^2g'(r^*)(2e^{-iw^*}+e^{iw^*})\\
&-c\mu_mg''(r^*)(2+e^{-2iw^*})+cg''(r^*)f'(\xi^*)(2\theta_2e^{-iw^*}+ \bar{\theta}_2e^{-iw^*})\\
&-c\mu_p(f''(\xi^*)+2cf'(\xi^*))\theta_2^2\bar{\theta}_2(2+e^{-2iw^*})+cg'(r^*)(f''(\xi^*)+2cf'^2(\xi^*))\\
&\cdot(2\bar{\theta}_2+\theta_2)\theta_2e^{-iw^*}+4c^2\mu_m\mu_pf'(\xi^*)\theta_2(\bar{\theta}_2e^{iw^*}+ \bar{\theta}_2e^{-iw^*}+\theta_2e^{-iw^*})\\
&-4c^2\mu_mf'(\xi^*)g'(r^*) (\bar{\theta}_2 + {\theta}_2+  \theta_2e^{-2iw})
\end{bmatrix*}\notag\\
&+\epsilon^*A^2\bar{A}\bar{d}^T
\begin{bmatrix*}[l]
& 2c\mu_m^2 (a_1+b_1)-2c\mu_mf'(\xi^*)(a_1\bar{\theta}_2e^{iw}+b_1{\theta}_2e^{-iw}+b_2+a_2e^{-2iw^*}) \\
&+(f''(\xi^*)+2cf'^2(\xi^*))(a_2\bar{\theta}_2+b_2\theta_2)e^{-iw^*}\\
\hdashline
& -c\mu_pf'(\xi^*)(b_2\theta_2+a_2\bar{\theta}_2e^{-2iw^*}+b_2\theta_2e^{-iw^*}+a_2\bar{\theta}_2e^{iw^*})\\
&+c\mu_m\mu_p(b_1\theta_2+a_1\bar{\theta}_2+a_2+b_2)+g''(r^*)e^{-iw^*}(a_1+b_1)\\
&+ cf'(\xi^*)g'(r^*)(b_2e^{-iw^*}+a_2 e^{-iw^*}+b_1\theta_2e^{-iw^*}+a_1\bar{\theta}_2 e^{-iw^*})\\
&- c\mu_mg'(r^*)(b_1e^{-iw^*}+a_1 e^{iw^*}+b_1+a_1 e^{-2iw^*}) \label{eqn-s-518}
\end{bmatrix*}.
\end{align}We arrive at,
\begin{theorem}\label{Th34}Assume that (B1), (B2) and (B3) hold. The normal form of system~(\ref{SDDE-general-1}) near its equilibrium  is   (\ref{eqn-s-518}).
\end{theorem}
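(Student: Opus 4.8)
The plan is to read off the normal form from the multiple time scale expansion already prepared for the Taylor-truncated system~(\ref{Taylor}), and to verify that secular-term removal at third order produces exactly~(\ref{eqn-s-518}). First I would fix the ansatz~(\ref{mts-1}) together with the scale separation~(\ref{mts-2}), the delayed expansion~(\ref{mts-3}), and the detuning $\epsilon=\epsilon^*+s^2\delta$, recalling that the $T_1$ scale may be dropped since a second-order approximation forces independence of $T_1$. Substituting these into~(\ref{Taylor}) and collecting like powers of the bookkeeping parameter $s$ reduces the state-dependent delay problem to a hierarchy of constant-coefficient linear delay equations sharing the single linear operator $\mathcal{L}x:=D_0x-\epsilon^*Mx-\epsilon^*Nx(T_0-1,\,T_2)$, namely the order-$s$, order-$s^2$~(\ref{s-eqn-2}) and order-$s^3$~(\ref{eqn-s-514}) equations.

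At order $s$ the homogeneous equation has the purely oscillatory solution $x_1=A(T_2)\theta e^{iw^*T_0}+\text{c.c.}$, where $iw^*=i\beta(\epsilon_0)$ is the purely imaginary eigenvalue supplied by Lemma~\ref{lemma-1}(ii) and $\theta$ the associated critical eigenvector, while the amplitude $A$ remains an undetermined function of the slow time $T_2$. At order $s^2$, substituting $x_1$ gives the forced equation~(\ref{eqn-5-10}), whose right-hand side contains only the harmonics $e^{\pm i2w^*T_0}$ and the mean mode. Here hypothesis (B3), that $i2w^*$ is not an eigenvalue of~(\ref{linearized}), is exactly what guarantees that the symbol of $\mathcal{L}$ evaluated on the $e^{i2w^*T_0}$ subspace is invertible, so a particular solution $x_2=aA^2e^{i2w^*T_0}+bA\bar{A}+\bar{a}\bar{A}^2e^{-i2w^*T_0}$ exists, with the vectors $a,b$ determined explicitly as in~(\ref{eqn-b-1-2}).

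The decisive step is order $s^3$. Substituting $x_1$ and $x_2$ into~(\ref{eqn-s-514}) produces a forcing whose $e^{iw^*T_0}$ component is resonant with $\ker\mathcal{L}$, so a bounded, secular-free particular solution $x_3=\phi(T_2)e^{iw^*T_0}$ exists only if the Fredholm alternative~(\ref{eqn-s-515}) holds. I would compute the adjoint null vector $d$ solving $[-iw^*I-\epsilon^*M^T-\epsilon^*N^Te^{iw^*}]d=0$, normalized by $\bar{d}^T\theta=1$, obtaining the displayed $d$. Imposing the orthogonality $\bar{d}^T\chi=0$, with $\chi$ the resonant coefficient~(\ref{eqn-s-517}) assembled from the cubic monomials of~(\ref{Taylor}) together with the quadratic feedback through $a,b$, and simplifying the $A'$ and linear-in-$\delta$ contributions via the identity $iw^*\theta-\epsilon^*M\theta-\epsilon^*N\theta e^{-iw^*}=0$, yields the amplitude equation~(\ref{eqn-s-518}), which is by definition the normal form.

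The main obstacle is purely computational bookkeeping: organizing the many quadratic and cubic monomials in $u_1,v_1,u_2,v_2$ and their delayed counterparts so that, after projection onto $e^{iw^*T_0}$, each contribution is correctly attributed to the $\delta A$, $A'$, or $A^2\bar{A}$ coefficient. Particular care is needed to retain only the genuinely resonant cubic products, for instance the $A^2\bar{A}$ terms arising both from the cubic vector field in $x_1$ and from the $x_1x_2$ cross terms (where $\bar{A}e^{-iw^*T_0}\!\cdot aA^2e^{i2w^*T_0}$ and $Ae^{iw^*T_0}\!\cdot bA\bar{A}$ each feed the resonance), while discarding the nonresonant harmonics absorbed into $x_3$, and to substitute $\theta_2$ and the identity $\theta_2e^{-iw^*}+\bar{\theta}_2e^{iw^*}=2\mu_m/f'(\xi^*)$ consistently throughout.
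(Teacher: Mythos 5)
Your proposal is correct and follows essentially the same route as the paper: the paper's ``proof'' of Theorem~\ref{Th34} is precisely the multiple-time-scale computation of the preceding subsection, with the ansatz~(\ref{mts-1})--(\ref{mts-3}), the detuning $\epsilon=\epsilon^*+s^2\delta$, the use of (B3) to solve the second-order equation for $a$ and $b$, and the solvability condition $\bar{d}^T\chi=0$ at third order with the normalization $\bar{d}^T\theta=1$ yielding~(\ref{eqn-s-518}). You identify all the same ingredients, including the identities $iw^*\theta-\epsilon^*M\theta-\epsilon^*N\theta e^{-iw^*}=0$ and $\theta_2e^{-iw^*}+\bar{\theta}_2e^{iw^*}=2\mu_m/f'(\xi^*)$, so nothing further is needed.
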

We remark that if the state-dependent translation time $T$ is not included in the diffusion time $\tau$, namely, $c=0$, the normal form at (\ref{eqn-s-518}) becomes the normal form for system (\ref{SDDE-general-1}) with constant delay $\epsilon$:
\begin{align}\label{eqn-s-519}
 A'
= &\frac{iw^* e^{iw^*}\delta}{\epsilon^*(e^{iw^*}+\epsilon^*
\bar{d}^TN\theta)}A\notag\\
& +\frac{{\epsilon^*}}{( {e^{iw^*}}+{\epsilon^*}\bar{d}^TN\theta)}\bar{d}^T
\begin{bmatrix*}[l]
&f''(\xi^*) (a_2\bar{\theta}_2+b_2\theta_2)+ \frac{1}{2}f'''(\xi^*)\theta_2^2\bar{\theta}_2 \\ 
& g''(r^*)(a_1+b_1)+ \frac{1}{2}g'''(r^*)
\end{bmatrix*}A^2\bar{A}.
\end{align} 

%
%
 
\section { Hes1 system~(\ref{SDDE-hes1})  and numerical simulation}\label{Sec6}
In this section, we consider the prototype system~(\ref{SDDE-hes1}) and illustrate the results of section 3 by numerical simulation. In the following,  a vector is said to be positive if all coordinates are positive. To have assumptions (B1) and (B2) hold for system~(\ref{SDDE-hes1}), we show that,
\begin{theorem} Consider system~(\ref{SDDE-hes1}) with positive parameters $\mu_m$, $\mu_p$, $\alpha_m$, $\alpha_p$, $h$, $\bar{y}$, $\epsilon$ and $c$.  Suppose that $\alpha_m\leq\frac{1}{c}$. Then system~(\ref{SDDE-hes1}) has  a positive equilibrium $(x^*,\,y^*)$ and there exists a neighborhood $U$ of $(x^*,\,y^*)\subset C^1([-\alpha_0,\,0];\mathbb{R}^2)$ with $\alpha_0>0$  such that, for every  initial data for $(x,\,y)\in U$,  and initial data for $\tau$ in $(0,\,\alpha_0)$ satisfying the compatibility conditions at (\ref{compatibility}), there exists a unique positive solution $(x,\,y,\,\tau)$ on $[0,\,+\infty)$ with $\dot{x}(t)<\alpha_m$ for every $t\geq 0$.
\end{theorem}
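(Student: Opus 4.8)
The plan is to establish three things in sequence: existence of a positive equilibrium, local existence/uniqueness of solutions (verifying B1), and positivity together with the a priori bound $\dot{x}(t)<\alpha_m$ (which yields B2). First I would locate the equilibrium. Setting the right-hand sides of the first two equations in (\ref{SDDE-hes1}) to zero gives $x^* = \frac{\alpha_m}{\mu_m(1+(y^*/\bar{y})^h)}$ and $y^* = \frac{\alpha_p}{\mu_p}x^*$. Substituting one into the other produces a single scalar equation in $y^*$; since the left side $\mu_p y^*$ is strictly increasing from $0$ to $+\infty$ while the right side $\frac{\alpha_p\alpha_m}{\mu_m(1+(y^*/\bar{y})^h)}$ is strictly decreasing and bounded, the Intermediate Value Theorem gives a unique positive root, hence a unique positive equilibrium $(x^*,\,y^*)$. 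At equilibrium $\tau^*=\epsilon$ automatically, so the third equation is consistent.

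Next I would address the bound $\dot{x}(t)<\alpha_m$, because this is really the crux: assumption (B2) requires $1/c>\dot{x}(t)$, and the hypothesis $\alpha_m\le 1/c$ is precisely what converts the natural bound $\dot{x}<\alpha_m$ into $\dot{x}<1/c$. The key observation is that the first equation gives
\begin{align*}
\dot{x}(t)=-\mu_m x(t)+\frac{\alpha_m}{1+(y(t-\tau)/\bar{y})^h}\le -\mu_m x(t)+\alpha_m,
\end{align*}
since the Hill term is bounded above by $\alpha_m$. Thus whenever $x(t)>0$ we have $\dot{x}(t)<\alpha_m$ directly, and combined with $\alpha_m\le 1/c$ this secures $\dot{x}(t)<1/c$. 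Positivity itself I would obtain from a standard invariance argument: the nonnegative cone is forward invariant because on the boundary $\{x=0\}$ one has $\dot{x}=\alpha_m/(1+\cdots)>0$ and on $\{y=0\}$ one has $\dot{y}=\alpha_p x(t-\tau)\ge 0$, so trajectories starting in the positive cone cannot exit it. The estimate $\dot{x}<\alpha_m$ then holds for all $t\ge 0$, giving (B2) globally.

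For existence and uniqueness (B1) I would invoke the general well-posedness theory for threshold-type state-dependent delay equations, treating the system in the transformed form (\ref{tranformed-general}) where the delay has been rendered constant. The denominator $1-c(-\mu_m r+f(\xi(\eta-1)))$ appearing in (\ref{tranformed-general}) must stay bounded away from zero; this is exactly guaranteed by $\dot{x}<1/c$, i.e. by the bound just established, so the transformed vector field is $C^1$ on the relevant domain and the classical method of steps (or the framework of \cite{BHK,HKT} for such threshold delays) yields a unique solution on a maximal interval. The a priori bounds—positivity below and $\dot{x}<\alpha_m$ above, together with the linear dissipation $-\mu_m x$ forcing $x$ to stay in a bounded set—prevent finite-time blow-up and prevent the denominator from degenerating, so the solution extends to all of $[0,\,+\infty)$.

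The main obstacle I anticipate is not the equilibrium or the differential inequality, which are elementary, but the careful verification that the compatibility conditions (\ref{compatibility}) select an admissible initial segment for which the transformed problem is genuinely well-posed, and that the neighborhood $U$ and $\alpha_0$ can be chosen so the denominator stays positive uniformly. I would handle this by first fixing $U$ small enough (using continuity of $f,\,g$ and the strict inequality $\alpha_m<1/c$ or the equality case handled by the strict bound $\dot{x}<\alpha_m\le 1/c$) that the bound $1-c\dot{x}>0$ holds on all of $U$, then shrinking $\alpha_0$ so the method-of-steps iteration remains inside $U$ on the first interval and the invariance argument propagates the bounds forward. The equality case $\alpha_m=1/c$ deserves a remark: the bound is still strict ($\dot{x}<\alpha_m=1/c$) because the Hill term is \emph{strictly} less than $\alpha_m$ for positive $y$, so (B2) is not violated.
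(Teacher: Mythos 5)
Your overall strategy matches the paper's: locate the positive equilibrium by a monotonicity/direct computation, obtain local existence and uniqueness from the general well-posedness theory, prove positivity and the bound $\dot{x}<\alpha_m\le 1/c$ by first-crossing-time arguments, and extend to $[0,+\infty)$ via the resulting a priori bounds. Two differences deserve comment. First, you route well-posedness through the transformed constant-delay system (\ref{tranformed-general}), whereas the paper applies the theory of \cite{Walther2004} directly to the state-dependent system and only afterwards derives the bounds; both are legitimate, and yours makes the role of the denominator $1-c\dot{x}$ transparent. Second, and more substantively, the paper's proof of $\dot{x}(t)<1/c$ contains a step your outline passes over: assuming a minimal $s_0$ with $\dot{x}(s_0)=1/c$, it first shows that on $[0,\,s_0]$
\[
\dot{\tau}(t)=\frac{\dot{x}(t)-\dot{x}(t-\tau(t))}{\frac{1}{c}-\dot{x}(t-\tau(t))}<1,
\]
so that $t-\tau(t)$ is strictly increasing and remains in $(-\tau_0,\,t)$. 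This is what guarantees that the delayed arguments $y(t-\tau(t))$ and $x(t-\tau(t))$ stay in the range where the solution (or the positive initial segment) has already been constructed; your estimate that the Hill term is strictly below $\alpha_m$, and your invariance argument for positivity, both presuppose exactly this, so without it the scheme is circular. You should also make the boundary argument for $y$ strict: at the first vanishing time $t_0$ of $y$ one has $\dot{y}(t_0)=\alpha_p x(t_0-\tau(t_0))>0$ because $x$ is still positive at the earlier time $t_0-\tau(t_0)<t_0$; the inequality $\dot{y}\ge 0$ alone does not exclude a tangential crossing. With these two points made explicit, your argument coincides with the paper's.
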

\begin{proof} Since all parameters of  system~(\ref{SDDE-hes1}) are positive, direct computation shows that there exists a positive equilibrium. Then there exists    a neighborhood $U$ of $(x^*,\,y^*)\subset C^1([-\alpha_0,\,0];\mathbb{R}^2)$ such that every $(x,\,y)\in U$ is positive and $\dot{x}(s)<\frac{1}{c}$ for all $s\in [-\alpha_0,\,0]$.

Consider the initial value problem associated with  system~(\ref{SDDE-hes1}).  We obtain by the work of \cite{Walther2004} that the initial value problem has a unique  solution $(x,\,y,\,\tau)$ on $[0,\,t_e)$ where  $[0,\,t_e)$ is the maximal existence interval with $t_e>0$ or $t_e=\infty$ (c.f.,\cite{Driver1984}). 

Notice that if $\tau(s)=0$ for some $s\in (0,\,t_e)$ then from the third equation of system~(\ref{SDDE-hes1}) we have $\tau(s)=\epsilon>0$ which  contradicts $\tau(s)=0$.
Therefore, we have $\tau(s)>0$ for every $s\in [0,\,t_e)$.

Next we show that  $\dot{x}(t)<\frac{1}{c}$ for every $t\in (0,\,t_e)$. Assume that there exists a minimal $s_0\in (0,\,t_e)$ such that $\dot{x}(s_0)=\frac{1}{c}$. Then by the third equation of system~(\ref{SDDE-hes1})  we have for every $t\in [0,\,s_0]$,  
\[
\dot{\tau}(t)=\frac{\dot{x}(t)-\dot{x}(t-\tau(t))}{\frac{1}{c}-\dot{x}(t-\tau(t))}<1.
\] It follows that for every $t\in [0,\,s_0]$ we have $-\tau_0<t-\tau(t)<t$. Then we have
\[
\dot{x}(s)=-\mu_m x(s)+\frac{\alpha_m}{1+\left(\frac{y(s-\tau(s))}{\bar{y}}\right)^h}<\frac{\alpha_m}{1+\left(\frac{y(s-\tau(s))}{\bar{y}}\right)^h}<\alpha_m,
\]which leads to $\frac{1}{c}<\alpha_m$. This is a contradiction. Therefore, $\dot{x}(t)<\frac{1}{c}$ for every $t\in (0,\,t_e)$.

Next we show that $x(t)$ and $y(t)$  are positive on $[0,\,t_e)$. Suppose, for contradiction, that there exists $t_0\in (0,\,t_e)$ which is the minimal time such that $x(t)$ or $y(t)$   vanishes. If $x(t_0)=0$, then we have
\[
\dot{x}(t_0)=-\mu_m x(t_0)+\frac{\alpha_m}{1+\left(\frac{y(t_0-\tau(t_0))}{\bar{y}}\right)^h}=\frac{\alpha_m}{1+\left(\frac{y(t_0-\tau(t_0))}{\bar{y}}\right)^h}>0,
\]which means that there exists $t_0^*\in (0,\,t_0)$ such that $x(t_0^*)<0$. This contradicts the minimality of $t_0$.

 If $y(t_0)=0$, then we have
\[
\dot{y}(t_0)=-\mu_p y(t_0)+\alpha_px(t_0-\tau(t_0))=\alpha_px(t_0-\tau(t_0))>0,
\]which means that there exists $t_0^{**}\in (0,\,t_0)$ such that $y(t_0^{**})<0$. This contradicts the minimality of $t_0$. That is, $x(t)$, $y(t)$ and $\tau(t)$ are positive on $[0,\,t_e)$ and for every $t\in [0,\,t_e)$ we have
\begin{align}\label{est-x}
\dot{x}(t)=  -\mu_m x(t)+\frac{\alpha_m}{1+\left(\frac{y(t-\tau(t))}{\bar{y}}\right)^h}<\frac{\alpha_m}{1+\left(\frac{y(t-\tau(t))}{\bar{y}}\right)^h}<\alpha_m.
\end{align}It follows that 
\begin{align}\label{est-y}
\dot{y}(t)=  -\mu_p y(t)+\alpha_px(t-\tau(t))<\alpha_px(t-\tau(t))<\alpha_p(\alpha_m t+x(0)).
\end{align} By (\ref{est-x}) and (\ref{est-y}), and by  the third equation of  system~(\ref{SDDE-hes1}), $(x,\,y,\,\tau)$ is uniformly bounded on $[-\alpha_0,\,t_e)$ and hence extendable beyond $t_e$, which contradicts the maximality of $t_e$ and we have $t_e=\infty$.
\qed
\end{proof}
Now we turn to assumption (B3). Let $\mu_m=0.03$, $\mu_p=0.04$, $\alpha_m=35$, $\alpha_p=10$, $\bar{y}=1200$, $h=5$. Then we have the equilibria $(r^*,\,\xi^*)=(11.97050076, \,2992.625189)$, $\omega^*=0.47038322$. We have $f'(\xi^*)=-0.00059384374$, $g'(r^*)=10$. Then we have
\[
\mu_m\mu_p=1.2\times 10^{-3}  <5.9384374\times 10^{-3}=-f'(\xi^*)g'(r^*).
\]
Then by Lemma~\ref{lemma-1} and by Theorem~\ref{thm3-3}, there exists a unique $\epsilon_0=6.86216245$, such that for every $\epsilon\in (0,\,\epsilon_0)$, the equilibria $(r^*,\,\xi^*)$ is stable; for every $\epsilon\in [\epsilon_0,\,\infty)$ the equilibria $(r^*,\,\xi^*)$ is unstable and   $\epsilon_0$ is a critical value of $\epsilon$ for which system~(\ref{SDDE-hes1}) undergoes Hopf bifurcation. For $\epsilon=\epsilon_0$ and $\lambda=2iw^*$, we have 
\begin{align*}
& (\lambda+\epsilon\mu_m)(\lambda+\epsilon\mu_p)-\epsilon^2f'(\xi^*)g'(r^*)e^{-2\lambda}\big|_{\,\lambda=2iw^*,\,\epsilon=\epsilon_0}\\
= & -0.9140361052+0.1856539388\,i\neq 0.
\end{align*}That is, (B3) holds. By Theorem~\ref{Th34}, the normal form of system~(\ref{SDDE-hes1}) near  $(r^*,\,\xi^*)$ is,
\begin{align*}
A'= &(0.01841158248+0.04829902976\,i)\delta A \\
& +
 \left((2.114544332\,c^2+0.0008578251748\,c-0.001233336633)\right.\\
& \left.-i(1.469928514\, c^2+0.002534237744\,c-0.003599996653)\right)A^2\bar{A},
\end{align*}
where the real part of the coefficient of the cubic term $ A^2\bar{A}$ is 
\[
2.114544332\,c^2+0.0008578251748\,c-0.001233336633,
\]which is negative for $0<c<c_0:=0.02394886242,$ and is positive for $c>c_0.$

If $0<c<c_0$, system~(\ref{SDDE-hes1}) undergoes supercritical Hopf bifurcation near  $(r^*,\,\xi^*)$ as $\epsilon$ crosses the critical value $\epsilon_0$. Namely, if $0<\epsilon<\epsilon_0$, the equilibrium is stable (see Figure~\ref{sup-fig-periodic}(a));  if $\epsilon>\epsilon_0$, there is a stable periodic solution near  $(r^*,\,\xi^*)$ (see Figure~\ref{sup-fig-periodic}(b)).

If $c>c_0$, system~(\ref{SDDE-hes1}) undergoes subcritical Hopf bifurcation near  $(r^*,\,\xi^*)$ as $\epsilon$ crosses the critical value $\epsilon_0$. Namely, if $0<\epsilon<\epsilon_0$, the equilibrium is stable  (see Figure~\ref{sub-fig-periodic}(a))  and  there is a unstable periodic solution near  $(r^*,\,\xi^*)$. See Figure~\ref{sub-fig-periodic}(a), where the equilibrium  $(r^*,\,\xi^*)=(11.97050076, \,2992.625189)$ is asymptotically stable with $\epsilon=\epsilon_0-\delta$, $c=c_0+0.001$ (the solid curve); when initial value is far enough from the equilibrium   $(r^*,\,\xi^*)=(11.97050076, \,2992.625189)$, solution with nonpositive initial values  (the dashed curve) may converge to a negative equilibrium.
  If $\epsilon>\epsilon_0$, the equilibrium  $(r^*,\,\xi^*)$ is unstable  (see Figure~\ref{sub-fig-periodic}(b)). 
\begin{figure}[H]
\scalebox{0.54}{
\begin{subfigure}[b]{0.45\textwidth}
 \includegraphics[angle=0]{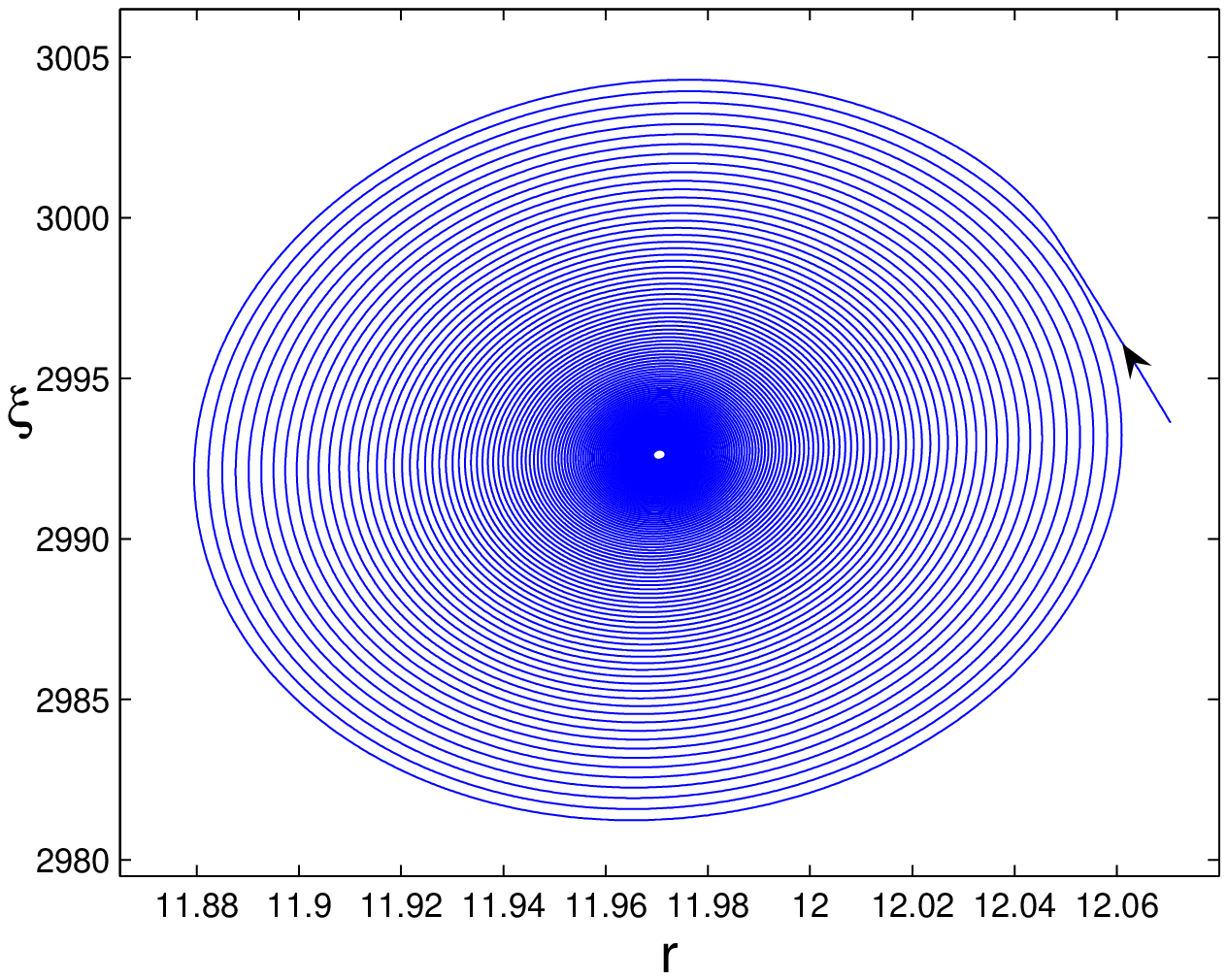}
 \subcaption{}
 \end{subfigure}
 }\hspace*{8em}
 \scalebox{0.54}{
 \begin{subfigure}[b]{0.45\textwidth}
 \includegraphics[angle=0]{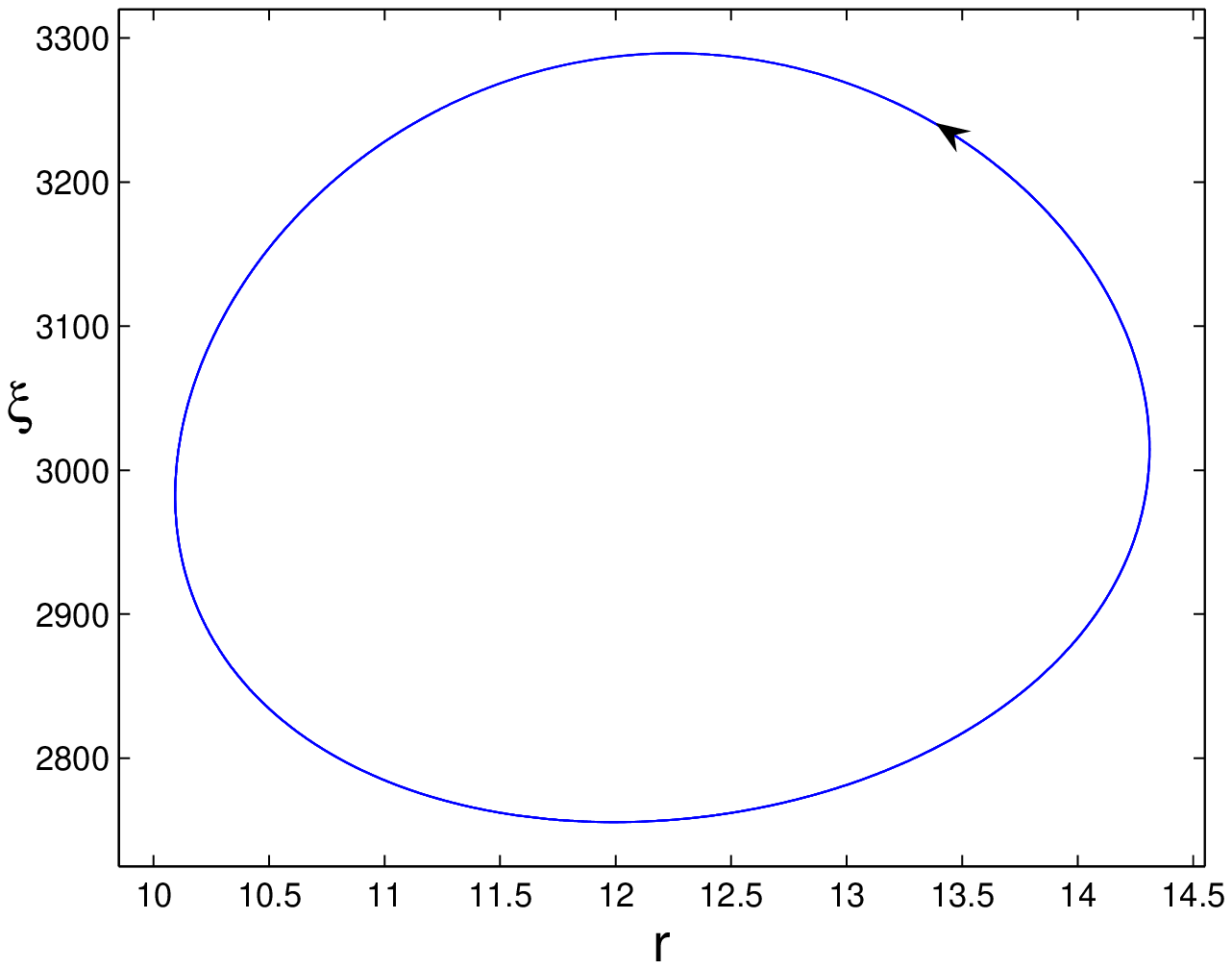}
 \subcaption{}
 \end{subfigure}
 }
 \caption{(a) Equilibrium  $(r^*,\,\xi^*)=(11.97050076, \,2992.625189)$ is stable with $\epsilon=\epsilon_0-\delta$, $c=0.01<c_0$ with $\delta=0.1$; (b) periodic solution appears at $\epsilon=\epsilon_0+\delta$, $c=0.01<c_0$.}
\label{sup-fig-periodic}
 \end{figure}
 \begin{figure}[H]
\scalebox{0.54}{
\begin{subfigure}[b]{0.45\textwidth}
 \includegraphics[angle=0]{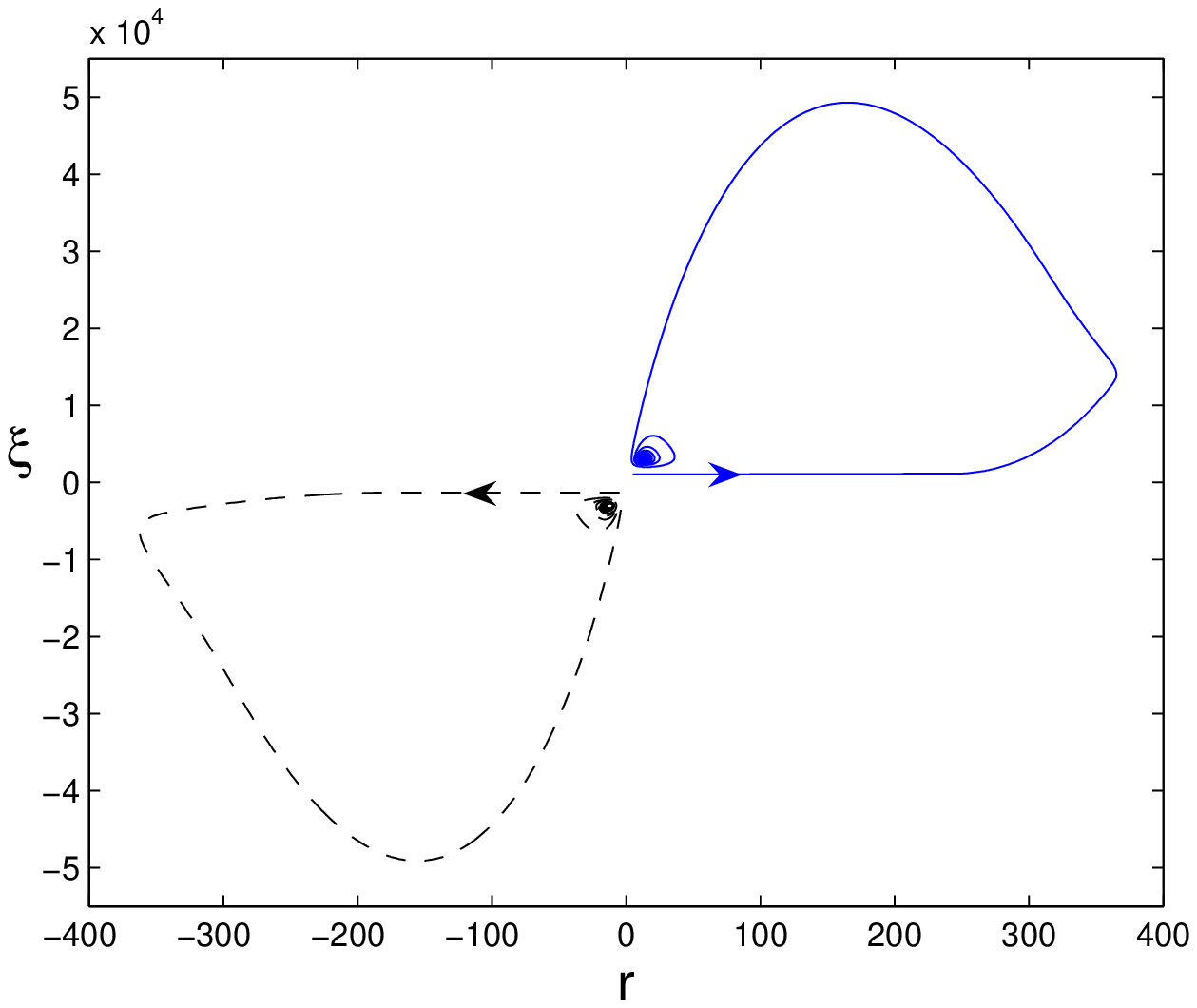}
 \subcaption{}
 \end{subfigure}
 }\hspace*{8.25em}
 \scalebox{0.54}{
 \begin{subfigure}[b]{0.45\textwidth}
 \includegraphics[angle=0]{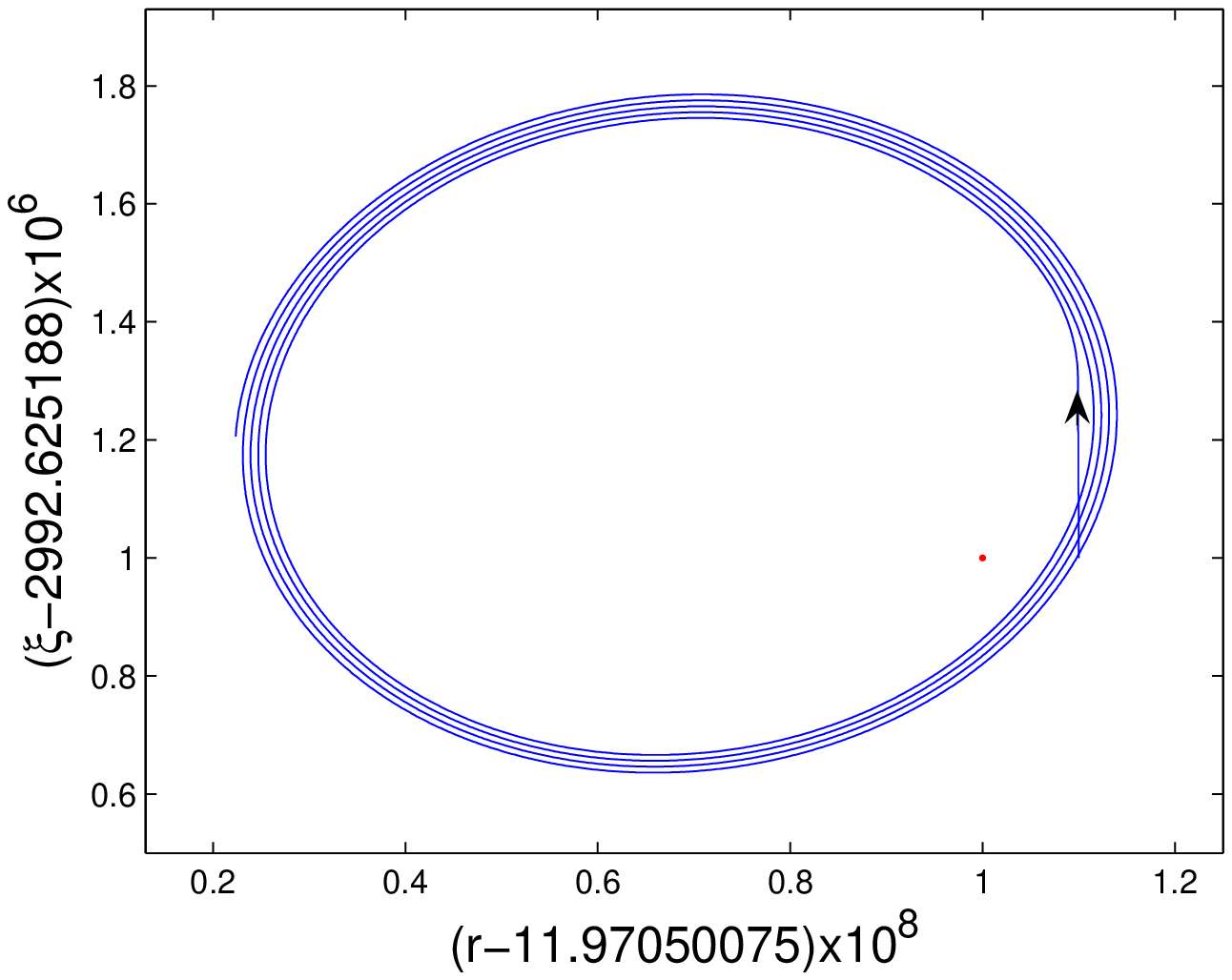}
 \subcaption{}
 \end{subfigure}
 }
 \caption{(a) Equilibrium  $(r^*,\,\xi^*)=(11.97050076, \,2992.625189)$ is asymptotically stable with $\epsilon=\epsilon_0-\delta<\epsilon_0$, $c=c_0+0.001$ (see the solid curve); when initial value is far enough from the equilibrium   $(r^*,\,\xi^*)=(11.97050076, \,2992.625189)$, solution may converge to another equilibrium  (see the dashed curve). Subcritical bifurcation occurs at $\epsilon_0$ with $0<c<c_0$. (b)  If $\epsilon>\epsilon_0$ and $c=c_0+0.001$, the equilibrium  $(r^*,\,\xi^*)$ is unstable. }
\label{sub-fig-periodic}
 \end{figure}

\section{Concluding remarks}\label{Sect5}
Starting from  model (\ref{SDDE-general-1}) of differential equations with threshold type state-dependent delay,
we obtained  model (\ref{tranformed-general}) with constant delay equivalent to (\ref{SDDE-general-1})  under assumptions (B1), (B2) and (B3), by using a time domain transformation.  System~ (\ref{tranformed-general})  provides a possibility to investigate the dynamics of the original system near the equilibrium points. We remark that   the normal form  (\ref{eqn-s-518}) of the general   type of system~(\ref{tranformed-general})  obtained by using the method of multiple time scales is also applicable to the following type of system:
   \begin{align}\label{tranformed-general-c-1}
\left\{
\begin{aligned}
\frac{1}{ \epsilon} \frac{\mathrm{d}r}{\mathrm{d} \eta} & ={-\mu_m r(\eta)+f(\xi(\eta-1))}\\
 \frac{1}{ \epsilon}  \frac{\mathrm{d}\xi}{\mathrm{d} \eta} & = {-\mu_p \xi(\eta)+g(r(\eta-1))},
\end{aligned}
\right.
\end{align}
  which is equivalent to system~(\ref{SDDE-general-c-0}). Even though many specific types of models of genetic regulatory dynamics have been developed in recent years (see  \cite{WAN2009464}, among many others), the general normal form we developed applies to a broad class of models.  
  
 Modeling the state-dependent delay of the genetic regulatory dynamics gives rise to the parameters $c>0$ and $\epsilon>0$ in  model (\ref{tranformed-general}) with constant delay, where $c$ takes account of the state-dependent fluctuation of the diffusion time for  homogenization of the substances produced in the regulatory network and $\epsilon$ measures the basal diffusion time. Since $c$ is the coefficient of the concentration disparity $x(t)-x(t-\tau)$, it is an analogue of the diffusion coefficients in \cite{Mahaffy-Busenberg}. The basal diffusion time $\epsilon$ is an analogue of the active macromolecular transport modeled as a time delay in  \cite{Mahaffy-Busenberg}. 
  
  From the analysis of  model (\ref{tranformed-general}), we find that the characteristic equation is independent of   $c$ while  the basal diffusion time $\epsilon$ determines (with other parameters fixed) the stability of the equilibrium state and the approximate period of the bifurcating periodic solutions. However, the state-dependent effect represented by $c$  may give rise to both supercritical and subcritical Hopf bifurcations, while the former has been observed in \cite{WAN2009464}, the latter is rarely seen in the literature. Note that  the work of \cite{WAN2009464} is a theoretic analysis with normal form computation for the model with constant delay considered in \cite{Smolen1999a}, where the nonlinear feedbacks are Holling type III functions and  only numerical analysis was conducted.
  
  A subcritical Hopf bifurcation usually means a sudden change of the dynamics when the system state is far enough from the stable equilibrium state. In terms of the model for the state-dependent delay
 $
 \tau(t)=\epsilon+c(x(t)-x(t-\tau(t))), 
 $ the subcritical Hopf bifurcation occurs when the contribution of homogenization to the diffusion time is strong enough and the basal diffusion time $\epsilon$ is less than its critical value. This observation  may shed lights on the mutation phenomenon in certain genetic regulatory dynamics. 
 
 We remark that the numerical results in Section~\ref{Sec6} is consistent with the observations from  \cite{Mahaffy-Busenberg}   described in the Section~\ref{Sec1}: when the analogous diffusion coefficient $c$ is large enough, the Hes1 system undergoes subcritical Hopf bifurcation with respect to $\epsilon$ and cannot sustain stable periodic oscillations; when the analogous diffusion coefficient $c$ is small, the Hes1 system undergoes supcritical Hopf bifurcation with respect to $\epsilon$ and can sustain stable periodic oscillations. The  state-dependent delay provides an approach to take into account of both the diffusive and active macromolecular transports without the complication of involving spatial information which typically will lead to reaction-diffusion equations.

 We also remark that the attempt to model the state-dependent delay with an inhomogeneous linear function of the state variables   reveals that the  concentration gradients in the genetic regulatory dynamics   may change the Hopf bifurcation direction modeled with constant delays.  Since diffusion of  substances in live cell may be confined or otherwise free and is a complex process \cite{Wu1430}, improved models for the state-dependent delay in the diffusion process may provide better means for understanding the underlying mechanisms of genetic regulatory dynamics.  For instance, what if the stationary state of $\tau$ depends on $c$ so that   $c$ also changes the stability of the equilibia?  It also remains to be investigated that to which extent the model of regulatory dynamics with linear type of state-dependent delay reflects that with a nonlinear one. 

 \section*{Acknowledgement}The author would like to thank an anonymous referee for the detailed and constructive comments which greatly improved the paper.
\bibliographystyle{siam}

\end{document}